\DeclareMathOperator{\Ber}{Ber}
\DeclareMathOperator{\Expo}{Expo}
\DeclareMathOperator{\Unif}{Unif}
\DeclareMathOperator{\Geo}{Geo}
\newcommand{\R}{\mathbb{R}}
\newcommand{\N}{\mathbb{N}}
\newcommand{\Z}{\mathbb{Z}}
\theoremstyle{definition}
\newtheorem{remark}{Remark}
\theoremstyle{plain}
\newtheorem{theorem}{Theorem}
\newtheorem{corollary}{Corollary}
\newtheorem{lemma}{Lemma}
\let\blx@rerun@biber\relax
\title{Subsequential tightness for branching random walk in random environment}
\author{Xaver Kriechbaum}
\date{}
\begin{document}
\maketitle
\begin{abstract}
We consider branching random walk in random environment (BRWRE) and prove the existence of deterministic subsequences along which their maximum, centered at its mean, is tight. This partially answers an open question in \cite{CernyDrewik}. The method of proof adapts an argument developed by Dekking and Host for branching random walks with bounded increments. The question of tightness without the need for subsequences remains open.
\end{abstract}
\section{Introduction, model and main result}
We consider branching random walk in (spatial, time independent) random environment and focus on the study of its maximum. From \cite{CometsPopov}, which proves a shape theorem for a BRWRE on $\Z^d$, $d\ge 1$, one can infer that the maximum satisfies a law of large numbers. Further, a functional central limit theorem for the maximum is proven in \cite{CernyDrewik}. The goal of this paper is to prove tightness along a subsequence for the maximum recentered around its quenched mean. This is motivated by, and partially answers, the third open question in \cite{CernyDrewik}. We only consider the case of a single starting particle.

We begin by introducing the model given in \cite{CernyDrewik} in some more detail. Let $(\xi(x))_{x\in\Z}$ be an i.i.d. collection of random variables on a probability space $(\Omega,\mathcal{F},\mathbb{P})$ with $0<\mathrm{ei}:= \mathrm{ess\,inf}\,\xi(0)< \mathrm{ess\,sup}\,\xi(0) =:\mathrm{es}<\infty$. We use $E_{\mathbb{P}}$ to denote the expected value corresponding to $\mathbb{P}$. Given a realization of $\xi$ and an initial condition $x_0\in\Z$ start with one particle at site $x_0$. All particles move independently according to a continuous-time simple random walk with jump rate 1. While at site $x$ a particle splits into two at rate $\xi(x)$ independently of everything else. These particles then evolve independently according to the same mechanism. We write $P_{x}^\xi$ and $E_{x}^\xi$ (the \emph{quenched} law and expectation respectively) for the law of the process conditioned on starting with a single particle at $x$. Alternatively, we write $P^\xi, E^\xi$ and give our random variables a superscript $x$, which we suppress if $x = 0$. In case $\xi(x) = \xi(0)$ for all $x\in\Z$ we use $P_x^{\xi(0)}$, $E_x^{\xi(0)}$ instead of $P_x^{\xi}$, $E_x^\xi$.  We use $\mathbb{P}\otimes P_{x}^\xi$, $\mathbb{P}\otimes P^\xi$ or just $P_x$ or $P$ to denote the annealed law of the process.

Let $N(t)$ denote the set of particles alive at time $t$, for $Y\in N(t)$ we denote by $(Y_s)_{s\in[0,t]}$ the trajectory of the particle and its ancestors up to time $t$; this is called the genealogy of $Y$. We are interested in $M_t := \max_{Y\in N(t)} Y_t$. 

We use the notation $
\N_{L,\eta} := \N/L+\eta$ with $L\in\N$, $\eta\in [0,1/L)$.

The main result of this paper is the following
\begin{theorem}\label{th:Main}
Fix $L\in\N$, $\eta\in [0,1/L)$ and $\delta>0$. Then, there exists a deterministic subsequence $(t_k^{\delta,\eta})_{k\in\N}$ of $\N_{L,\eta}$ with $\limsup_{k\to \infty}t_k^{\delta,\eta}/k\le (1+ \delta)/L$ so that $(M_{t_k^{\delta,\eta}}-E^\xi[M_{t_k^{\delta,\eta}}])_{k\in\N})$ is tight with respect to the annealed measure.
\end{theorem}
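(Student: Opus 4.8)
The plan is to adapt the Dekking--Host recursion to the quenched/annealed setting. Put $a_t := E[M_t]$ for the annealed mean and, for $t\ge 0$,
\[
\beta_t := E\big[\,\big|M_t - E^\xi[M_t]\big|\,\big] = E_{\mathbb P}\,E^\xi\big|M_t - E^\xi[M_t]\big|,
\]
the annealed expectation of the quenched absolute deviation of $M_t$ from its quenched mean. Crude bounds on the number of particles alive at time $t$ (stochastically dominated by a Yule process of rate $\mathrm{es}$) and on their displacements give $a_t\le Ct$ and $\beta_t<\infty$, with $C=C(\mathrm{es})$. The strategy is then: (i) prove a recursion forcing $\beta_t$ to be small for ``most'' $t\in\N_{L,\eta}$; (ii) read off a deterministic subsequence along which $\beta_{t_k^{\delta,\eta}}$ stays bounded; (iii) conclude annealed tightness by Markov's inequality.

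For (i), the recursion I would establish is: for every $t'\in\N_{L,\eta}$ with $t'\ge 1/L$,
\[
\beta_{t'-1/L}\;\le\;\tfrac{2}{c}\,\big(a_{t'}-a_{t'-1/L}\big),\qquad c:=\inf_{\xi}P^\xi(\mathcal{A}_0)>0,
\]
where $\mathcal{A}_0=\mathcal{A}_0(1/L)$ is the event that at time $1/L$ the ancestral lineage is still at the origin and has produced at least one descendant that is also at the origin; uniform positivity $\inf_\xi P^\xi(\mathcal{A}_0)>0$ uses $0<\mathrm{ei}\le\xi(0)\le\mathrm{es}<\infty$. To prove it, fix a rule selecting at each branching one offspring to continue the ancestral lineage $(\mathcal{Y}_u)_{u\ge 0}$; since branchings cause no displacement, $(\mathcal{Y}_u)$ is a rate-$1$ simple random walk whose law does not depend on $\xi$, so under the annealed measure $\mathcal{Y}_{1/L}\perp\xi$ and $E[\mathcal{Y}_{1/L}]=0$. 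Split $[0,t']$ at $s:=1/L$, write $\theta_y\xi:=\xi(\cdot+y)$, $m(\zeta,u):=E_0^\zeta[M_u]$, and let $\mathcal{F}_s$ be the process up to time $s$. By the branching property at time $s$ (each particle then alive starts, given $\mathcal{F}_s$ and $\xi$, an independent copy of the process from its position), one has the pathwise bound $M_{t'}\ge\tilde M+(\tilde M'-\tilde M)^+\mathbf{1}_{\mathcal{A}_0}$, where $\tilde M$ is the maximum at time $t'$ among the descendants of the time-$s$ ancestral particle and, on $\mathcal{A}_0$, $\tilde M'$ the analogous maximum for one further particle at the origin; here $E^\xi[\tilde M\mid\mathcal{F}_s]=\mathcal{Y}_s+m(\theta_{\mathcal{Y}_s}\xi,t'-s)$ always, and on $\mathcal{A}_0$ the pair $(\tilde M,\tilde M')$ is conditionally i.i.d.\ with common mean $m(\xi,t'-s)$. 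Taking $E^\xi[\cdot\mid\mathcal{F}_s]$ and using $E^\xi[(\tilde M'-\tilde M)^+\mid\mathcal{F}_s]=\tfrac12 E^\xi[\,|\tilde M'-\tilde M|\mid\mathcal{F}_s]\ge\tfrac12 E^\xi[\,|\tilde M-m(\xi,t'-s)|\mid\mathcal{F}_s]$ (conditional Jensen for i.i.d.\ pairs) on $\mathcal{A}_0$, then $P^\xi(\mathcal{A}_0)\ge c$, gives
\[
m(\xi,t')\;\ge\;E^\xi\!\big[\mathcal{Y}_s+m(\theta_{\mathcal{Y}_s}\xi,\,t'-s)\big]+\tfrac{c}{2}\,E_0^\xi\big|M_{t'-s}-m(\xi,t'-s)\big|.
\]
Applying $E_{\mathbb P}$ and using $E[\mathcal{Y}_s]=0$ together with the shift-invariance $\theta_y\xi\overset{d}{=}\xi$ (so that $E_{\mathbb P}[m(\theta_y\xi,u)]=a_u$ for every $y$), the first term on the right collapses to exactly $a_{t'-s}$, yielding $a_{t'}\ge a_{t'-s}+\tfrac{c}{2}\beta_{t'-s}$, which is the recursion (and in particular shows $a_{t'}\ge a_{t'-s}$).

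For (ii)--(iii): enumerating $\N_{L,\eta}$ as $t_j=\eta+j/L$, summing the recursion and telescoping gives $\sum_{j=0}^{k-1}\beta_{t_j}\le\tfrac{2}{c}a_{t_k}\le\tfrac{2C}{c}t_k\le C'k$ for large $k$, with $C'=C'(C,c,L,\eta)$. Hence the deterministic set $S:=\{j:\beta_{t_j}\le\lambda\}$ satisfies $\#\big(S\cap\{0,\dots,k-1\}\big)\ge k(1-C'/\lambda)-O(1)$; choosing $\lambda:=C'(1+\delta)/\delta$ makes the lower density of $S$ at least $1/(1+\delta)$, so listing $S=\{j_1<j_2<\cdots\}$ and setting $t_k^{\delta,\eta}:=t_{j_k}$ gives $j_k\le(1+\delta)k+O(1)$, whence $\limsup_k t_k^{\delta,\eta}/k\le(1+\delta)/L$, with $\sup_k\beta_{t_k^{\delta,\eta}}\le\lambda<\infty$. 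Markov's inequality under the annealed law then gives $P\big(|M_{t_k^{\delta,\eta}}-E^\xi[M_{t_k^{\delta,\eta}}]|>R\big)\le\lambda/R$ uniformly in $k$, which is the asserted tightness.

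I expect the only real difficulty to be the recursion, and specifically the need to avoid an additive error of order $t'$. Two points make it work: (a) the ancestral lineage has a law independent of the frozen environment, so the displacement accumulated while waiting for the favourable branching event costs nothing after averaging; and (b) one must center by the \emph{quenched} mean and only afterwards take the annealed expectation, so that the shift-invariance $\theta_y\xi\overset{d}{=}\xi$ can be used to evaluate $E_{\mathbb P}[m(\theta_y\xi,u)]=a_u$ exactly. A naive conditioning on ``the first branching'' instead produces $a_{t'}\ge c\,a_{t'-s}+\cdots$, whose error $(1-c)a_{t'-s}\asymp t'$ destroys the telescoping. The remaining ingredients — the continuous-time branching property at a fixed time, the uniform lower bound $\inf_\xi P^\xi(\mathcal{A}_0)>0$, and the linear bound $a_t\le Ct$ (alternatively the law of large numbers following from \cite{CometsPopov}) — are routine.
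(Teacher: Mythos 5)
Your proof is correct, and it takes a genuinely different and substantially shorter route to Theorem~\ref{th:Main} than the paper. The paper restarts the Dekking--Host comparison at the random time $\tau=\tau_s\wedge\tau_m\wedge(1/L)$; on $\{\tau_s<\tau_m\wedge 1/L\}$ both particles are at the origin and the DH inequality applies cleanly, but on $\{\tau\ne\tau_s\}$ the single particle at time $\tau$ may be at $\pm1$, producing the error term $E[\mathbf{1}_{\{\tau\ne\tau_s\}}(M_{t,1}-M_{t+\tau})]$. Controlling that term is essentially all of subsection~2.3: it requires the coupling via the hitting time $\sigma^y$, the exponential tail Lemma~\ref{lem:sigExpTail}, the increment bounds $E[M_{t,1}-M_{t-j/L,1}]$ for all $j\lesssim\log t$, and an intersection of countably many density-one subsequences (Lemmata~\ref{lem:ineqKn}--\ref{lem:dense}). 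You instead restart at the \emph{fixed} time $s=1/L$, track the ancestral lineage $(\mathcal{Y}_u)$, and observe two facts specific to the annealed setting: (a) the law of $\mathcal{Y}$ under $P^\xi$ is the same rate-$1$ simple random walk for every $\xi$, so $\mathcal{Y}_s\perp\xi$ under the annealed law and $E[\mathcal{Y}_s]=0$; and (b) the i.i.d.\ shift-invariance of $\mathbb{P}$ gives $E_{\mathbb{P}}[m(\theta_y\xi,u)]=a_u$ for every $y$. Together these make the term coming from the ancestral lineage collapse to exactly $a_{t'-1/L}$, with no additive error, yielding the clean recursion $a_{t'}\ge a_{t'-1/L}+\tfrac{c}{2}\beta_{t'-1/L}$ and a one-step density argument. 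In exchange, your argument uses the annealed average in an essential way: the cancellation $E_{\mathbb{P}}[m(\theta_{\mathcal{Y}_s}\xi,u)]=a_u$ has no quenched analogue without first controlling $m(\theta_y\xi,u)-m(\xi,u)$, which is precisely the sort of comparison the paper's $\sigma^y$-coupling is designed to handle. So your route is a real simplification for Theorem~\ref{th:Main}, but, unlike the paper's proof, it does not transfer with minimal changes to the quenched Theorem~\ref{Theo:Quenched}.

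Two minor points worth tightening when you write it up. First, be precise about which ``one further particle'' at the origin you select on $\mathcal{A}_0$ (for instance, the offspring produced at the first branching of the ancestral lineage that is not chosen to continue the lineage, required to still be at the origin at time $1/L$); this is needed so that $\mathcal{A}_0\in\mathcal{F}_s$ and the pair $(\tilde M,\tilde M')$ is well defined. Second, the uniform lower bound $P^\xi(\mathcal{A}_0)\ge e^{-2/L}\bigl(1-e^{-\mathrm{ei}/L}\bigr)>0$ uses only $\xi(0)\ge\mathrm{ei}>0$; $\mathrm{es}<\infty$ enters only through the linear bound $a_t\le Ct$ (as in Lemma~\ref{lem:limsup}).
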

The proof of Theorem \ref{th:Main} yields, with minimal changes, the following quenched result.

\begin{theorem}\label{Theo:Quenched}
Fix $L\in\N$, $\eta\in [0,1/L)$ and $\delta>0$. For $\mathbb{P}$-a.e. $\xi$ there exists a subsequence $(t_k^{\delta,\eta}(\xi))_{k\in\N}$ of $\N_{L,\eta}$ with $\limsup_{k\to \infty} t_k^{\delta,\eta}(\xi)/k\le (1+\delta)/L$ so that $(M_{t_k^{\delta,\eta}(\xi)}-E^\xi[M_{t_k^{\delta,\eta}(\xi)}])_{k\in\N}$ is tight with respect to $P^\xi$.
\end{theorem}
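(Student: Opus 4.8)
The plan is to adapt the Dekking--Host argument, which is built on a superadditivity/recursion estimate for the mean of the maximum. First I would set up the key quantity $m_t := E^\xi[M_t]$ and establish an approximate superadditivity relation of the form $m_{t+s} \gtrsim m_t + \text{(shift induced by descendants at time } t)$, which comes from the branching property: run the process to time $t$, and from each particle alive at time $t$ restart an independent copy; the maximum at time $t+s$ dominates the maximum over those subtrees. Combined with a lower bound on $|N(t)|$ that holds with good probability (the total mass grows exponentially, at an annealed rate controlled by $\mathrm{es}$, or in the quenched setting by the averaged environment), this yields that $m_t$ grows linearly with an explicit speed $v$, and — crucially — that the fluctuation of $M_t$ around $m_t$ does not accumulate too fast. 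The reason only a \emph{subsequence} can be extracted is that the Dekking--Host scheme controls a \emph{Cesàro-type average} of the tightness-defect quantities $E^\xi\bigl[(M_{t}-m_{t})^+\bigr]$ or $E^\xi\bigl[(m_{t}-M_{t})^+\bigr]$: one shows these averages stay bounded, but cannot rule out that a sparse set of times contributes all the mass, so one passes to the complementary (still density-one, hence infinite, and deterministic) subsequence along which they remain bounded.

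More concretely, the core inequality I would aim for is: there is a constant $C$ (depending on $L,\eta$) such that for all $t\in\N_{L,\eta}$,
\[
\frac{1}{t}\sum_{\substack{s\le t\\ s\in\N_{L,\eta}}} E\bigl[\,|M_s - m_s|\,\bigr] \le C.
\]
This is obtained by iterating the one-step estimate $m_{t+1/L} - m_t \ge v/L - (\text{error}_t)$ where $\sum_t \text{error}_t$ telescopes, using that $M_{t+1/L} - M_t$ has exponential tails uniformly in $t$ (bounded jump rates and the strong law for $|N(t)|$ give this; the increments over a fixed time step $1/L$ are stochastically dominated by something integrable, independent of $\xi$ via the $\mathrm{es}$ bound). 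Here the annealed case uses $E_{\mathbb{P}}$-averaging to make the environment-dependent growth rate deterministic; the quenched case instead invokes the i.i.d.\ structure of $\xi$ and a Borel--Cantelli / ergodic argument to get the same bound for $\mathbb{P}$-a.e.\ $\xi$, which is exactly why Theorem~\ref{Theo:Quenched} follows with only cosmetic changes. Once the averaged bound holds, a first-moment (Markov) argument along the time set $\N_{L,\eta}$ produces the subsequence $(t_k^{\delta,\eta})$: choose times where $E[|M_s-m_s|]$ is within a constant of the running average; the density of such times is at least $1/(1+\delta)$, which gives the growth bound $\limsup_k t_k^{\delta,\eta}/k \le (1+\delta)/L$, and boundedness of the first absolute moment along that subsequence immediately yields tightness.

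The main obstacle, I expect, is establishing the approximate superadditivity of $m_t$ with a \emph{uniform} (in $t$ and in $\xi$) control on the error term. Two issues intertwine here: (i) one must lower-bound the number of particles $|N(t)|$, or rather the number of particles sitting near the frontier from which a ``fresh'' maximum can grow, and do so with a probability close to $1$ quantitatively enough that the union bound over the $O(t)$ time steps does not destroy the estimate — this requires a large-deviation lower bound for the branching mechanism that is robust to the worst-case environment (hence the role of $\mathrm{ei} > 0$, which prevents the population from dying or stagnating); and (ii) the environment-dependence of $m_t$ means the ``shift'' contributed by a subtree rooted at a time-$t$ particle at site $x$ depends on $\xi$ restricted to a neighborhood of $x$, so to recover a clean recursion one needs either to average (annealed) or to exploit that along the relevant scales the empirical environment concentrates (quenched). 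Handling the spatial inhomogeneity while keeping the Dekking--Host telescoping intact — essentially showing the environment-averaged speed $v$ exists and the local deviations are summable — is the technical heart of the argument; everything after the averaged first-moment bound is soft.
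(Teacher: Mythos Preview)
Your proposal correctly identifies the broad framework (Dekking--Host, extraction of a dense subsequence from a Ces\`aro-type bound) and the soft final step, but it misidentifies the actual recursion and, more importantly, does not confront the difficulty that is specific to the random-environment case.

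The Dekking--Host inequality does \emph{not} come from restarting at time $t$ from all particles in $N(t)$; it comes from the \emph{first split}, which produces two independent copies of $M_t$ both started from the \emph{same} point $0$, yielding $E^\xi[\max(M_{t,1},M_{t,2})]=E^\xi[M_t]+\tfrac12 E^\xi[|M_{t,1}-M_{t,2}|]$. Your ``restart at time $t$'' description gives copies of $M_s^x$ for varying $x$ in a shifted environment, which does not produce the $|M_{t,1}-M_{t,2}|$ term and hence gives no tightness information. In the present model the genuine obstacle is that the first event may be a \emph{move}, not a split (jumps and branching are asynchronous), so even waiting for the first split may leave you at a site $x\neq 0$, and then the two copies are of $M_t^x$, not $M_t$. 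The paper isolates this by setting $\tau=\tau_s\wedge\tau_m\wedge 1/L$: on $\{\tau=\tau_s\}$ the classical argument works, on $\{\tau=1/L\}$ there is no contribution, and the entire difficulty is the error term $E^\xi[\mathbf{1}_{\{\tau=\tau_m\}}(M_t-M_t^{\pm 1})]$, which compares the maximum started at $0$ with the maximum started at $\pm 1$ in the \emph{same} environment. Your proposal does not locate this term at all.

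Controlling that error term is where the real work lies, and your suggested mechanism (``empirical environment concentrates'') is not what is needed and is not used. The paper couples $M_t^{\pm 1}$ to $M_{t-\sigma}$ where $\sigma$ is the first time a particle of the $\pm 1$-process hits $0$; exponential tails of $\sigma$ (from comparison with the $\mathrm{ei}$-homogeneous process) reduce the problem to bounding $E^\xi[M_t-M_{t-j/L}]$ simultaneously for all $j\le c\log t$. Each fixed $j$ admits a dense good subsequence by the same $\limsup E^\xi[M_t]/t\le x^\ast$ argument you sketch for $j=1$, and the key combinatorial step is to intersect these infinitely many subsequences while retaining density---this works because the bad sets have densities decaying like $j e^{-cj}$, hence summable. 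None of this structure appears in your plan. Finally, for the quenched statement specifically, no Borel--Cantelli or ergodic argument is needed: every estimate already holds $\xi$-by-$\xi$ via comparison with the homogeneous $\mathrm{ei}$- and $\mathrm{es}$-processes, and the proof of Theorem~\ref{Theo:Quenched} is literally the annealed proof with $E$ replaced by $E^\xi$; the subsequence becomes $\xi$-dependent only because the defining thresholds involve $E^\xi$.
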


\begin{remark}
We do not know whether it is possible to choose a deterministic subsequence $(t_k)_{k\in\N}$ of $\N_{L,\eta}$ such that $(M_{t_k}-E^\xi[M_{t_k}])_{k\in\N}$ is tight for $\mathbb{P}$-a.e. $\xi$. In particular, Theorems \ref{th:Main} and \ref{Theo:Quenched} do not imply each other.
\end{remark}

To prove Theorem \ref{th:Main} we adapt the Dekking-Host argument \cite{DH91}, which we now briefly recall in the classical context of deterministic branching random walk in discrete time, that is when $\xi(x) = 1$ for all $x$, $\mathbb{P}$-a.s. In that case, we have from the branching structure that, with $M_n, M_n'$ two independent copies of $M_n$ and $W$, $W'$ two independent copies of a $\Ber(1/2)$ random variable taking the values $\pm1$,
\begin{equation*}
\label{eq:ClassDH1}
M_{n+1}\stackrel{d}{=}\max(M_n+W,M_n'+W').
\end{equation*}
Taking expectation and using that $\max(a,b) = (a+b)/2+|a-b|/2$, we obtain that
\begin{align*}
\label{eq:ClassDH2}
E[M_{n+1}] &\ge E[\max(M_n,M_n')] = E[M_n]+E[|M_n-M_n'|]/2\\
&\ge E[M_n]+E[|M_n-E[M_n]|]/2
\end{align*}
and therefore
\begin{equation}
\label{eq:ClassDH3}
E[|M_n-E[M_n]|]\le 2(E[M_{n+1}]-E[M_n]).
\end{equation}
Since $M_{n+1}-M_n\le 1$, Dekking and Host conclude, that $E[M_{n+1}]-E[M_n]\le 1$, which then implies the tightness of $(M_n-E[M_n])_{n\in\N}$ using \eqref{eq:ClassDH3}.

The Dekking-Host argument generalizes to continuous time walks in deterministic environment, with asynchronous jumps and branching; we note that in that case, $M_{n+1}-M_n$ is not deterministically bounded. However, $E[M_n]/n$ converges by the subadditive ergodic theorem, and then moving to subsequences using the argument presented in \cite[p. 9]{OZLN}, which originated in \cite{BDZ1}, yields the analogue of Theorem 1. The case of random environments presents a genuine new difficulty, in that information on $\xi$ is embedded in the law of the configuration at time $1$, and in that (quenched) shift invariance is lost. This requires a considerably more involved argument, that we now describe.

We denote the time of the first split by $\tau_s$ and the time of the first move of any particle with $\tau_m$. We then define $\tau :=\tau_s\wedge \tau_m\wedge (1/L)$ and consider $\mathbf{1}_{\{\tau_s <\tau_m\wedge \frac{1}{L}\}}M_{t+\tau}$. As in the Dekking-Host argument, this has the same distribution as the maximum of two copies $M_{t,1}, M_{t,2}$ of $M_t$, which, given the environment, are independent of each other and also of $\tau_s$ and $\tau_m$. We use this setup in subsection 2.1 to derive the inequality
\begin{equation}
E[|M_{t,1}-M_{t,2}|] \le c^{-1}\left(E[M_{t+\frac{1}{L}}-M_t]+E[\mathbf{1}_{\{\tau\neq \tau_s\}}(M_{t,1}-M_{t+\tau})]\right). \label{eq:MainInequality}
\end{equation}
In order to obtain \eqref{eq:MainInequality}, we prove that $E[\mathbf{1}_{\{\tau_s<\tau_m\wedge 1/L\}}|M_{t,1}-M_{t,2}|]\ge c E[|M_{t,1}-M_{t,2}|]$, for which $\mathrm{ei}>0$ is essential.

We then derive bounds for the two summands in \eqref{eq:MainInequality} along suitable, arbitrarily dense, subsequences of $\N_{L,\eta}$ in the subsections 2.2 and 2.3 respectively.

 For the summand $E[M_{t+\frac{1}{L}}-M_t]$ this is analogous to Corollary 1 in \cite[p. 9]{OZLN} and uses only that $\limsup_{t\to \infty} E[M_t]/t<\infty$ (see Lemma \ref{lem:Oben}):

Remember that we write $M_t^x$ for the maximum at time $t$ starting with a single particle in $x$. We will sometimes use random variables as starting position. For the summand $E[\mathbf{1}_{\{\tau\neq \tau_s\}}(M_{t,1}-M_{t+\tau})]$ we use that on $\{\tau = \tau_m\}$ we have that $M_{t+\tau} \stackrel{d}{=} M_t^{S_1}$, $S_1\sim\Unif(\{-1,1\})$, which reduces the matter to bounding $E[\mathbf{1}_{\{\tau = \tau_m\}}(M_{t,1}-M_t^y)]$, $y\in\{\pm 1\}$.

 For this let $\sigma^y$ be the time at which any particle of the process with a single starting particle in $y$ hits 0. We can then use the descendants of the starting particle for $M_{t,1}$ as descendants, after time $\sigma^y$, of the particle which hits 0. This yields a coupling of $M_{t,1}$ and $M_t^y$ for which $
\mathbf{1}_{\{\sigma^y\le t\}}\mathbf{1}_{\{\tau_m = \tau\}}M_t^y\ge \mathbf{1}_{\{\sigma^y\le t\}}\mathbf{1}_{\{\tau_m = \tau\}}M_{t-\sigma^y,1},$
and it mainly remains to control $E[\mathbf{1}_{\{\sigma^y\le t\}}(M_{t,1}-M_{t-\sigma^y,1})]$.

 To do this we use the fact that there exist constants $c,C_1>0$ for which $P^\xi[\sigma^y\ge z]  \le ce^{-C_1 z}$ (see Lemma \ref{lem:sigExpTail}). We also utilize the bound
\begin{align}
&E[\mathbf{1}_{\{\sigma^y\le t\}}(M_{t,1}-M_{t-\sigma^y,1})]\notag\\
&\qquad\le \sum_{k=1}^{\lfloor L\cdot t\rfloor} E[\mathbf{1}_{\left\{\sigma^y\in \left[\frac{k-1}{L},\frac{k}{L}\right]\right\}}(M_{t,1}-M_{t-\frac{k}{L},1})]+E[\mathbf{1}_{\{\sigma^y\in [t-\eta,t]\}}M_{t,1}].\label{eq:UpperBoundSec23}
\end{align}
Because $\sigma^y$ has exponential tails, it suffices to find subsequences along which $E[M_{t,1}-M_{t-j/L,1}]$, $j\in\{1,\dots, \lfloor L\cdot t\rfloor\}$, are bounded by $c\cdot e^{c'(j-1)}$ with $c,c'$ constants, which are specified below. We first do this separately for each fixed $j$ and get in Corollary \ref{cor:FixjDenseSub} that we can achieve such a bound along arbitrarily dense subsequences. We then intersect these subsequences to get a, arbitrarily dense, subsequence along which $E[\mathbf{1}_{\{\tau\neq \tau_s\}}(M_{t,1}-M_{t+\tau})]$ is bounded (see Lemma \ref{lem:bounded}, Lemma \ref{lem:dense} and Corollary \ref{cor:Unten}). One important observation for this argument is that for fixed $t\in \N_{L,\eta}$, we only need $E[M_{t,1}-M_{t-j/L,1}]$ to be controlled for $j \le c\log(t)$, with $c$ a suitable constant specified below. The reason for this is that $\sigma^y$ has exponential tails and $E[M_{t,1}]$ grows at most linearly. This implies that in \eqref{eq:UpperBoundSec23} for all summands with $k\ge c\log t$ we get a good enough upper bound even if we ignore the $-M_{t-\frac{k}{L},1}$ term.

In subsection 2.4 we combine Lemma \ref{lem:Oben} and Corollary \ref{cor:Unten} to prove Theorem \ref{th:Main}, by intersecting suitably dense subsequences obtained in the aforementioned lemmata.

\noindent
{\bf Acknowledgements}  This project has received funding from the European Research Council (ERC) under the European Union's Horizon 2020 research and innovation programme (grant agreement No. 692452).

Thanks to Ofer Zeitouni for suggesting the problem and for many useful discussions.
\section{Details}
\subsection{Deriving Inequality \eqref{eq:MainInequality}}
 Let $\tau_m$ be the time of the first movement of any particle. Furthermore let $\tau_s$ be the the time of the first split, $\tau_s := \inf\{t\in\R_{\ge0} : |N(t)| = 2\}$. Both $\tau_s$ and $\tau_m$ are stopping times with respect to the filtration generated by $(|N(t)|,(Y_t^{(v)})_{v\in N(t)})_{t\ge0}$.

Let $L\in\N$ be arbitrary but fixed and set $\tau := \tau_s\wedge\tau_m\wedge(1/L)$. Then $\tau$ is also a stopping time with respect to that filtration.

 For $t\in \R_{\ge0}$ we have by definition that $\mathbf{1}_{\{\tau_s<\tau_m\wedge \frac{1}{L}\}}M_{t+\tau} \stackrel{d}{=} \mathbf{1}_{\{\tau_s<\tau_m\wedge \frac{1}{L}\}}\max_{k=1}^2 M_{t,k}$, 
where $M_{t,1}$, $M_{t,2}$ are copies of $M_t$, which are independent of each other and of $\tau_s$ and $\tau_m$ given the environment. Taking expectation and using that $a\vee b = (a+b +|a-b|)/2$ this yields that
\begin{align*}
E[\mathbf{1}_{\{\tau_s<\tau_m\wedge \frac{1}{L}\}}M_{t+\tau}] &=\frac{1}{2}E\left[\mathbf{1}_{\{\tau_s<\tau_m\wedge \frac{1}{L}\}}(M_{t,1}+M_{t,2}+|M_{t,1}-M_{t,2}|)\right]\\
&=E[\mathbf{1}_{\{\tau_s<\tau_m\wedge \frac{1}{L}\}}M_{t,1}]+\frac{1}{2}E[\mathbf{1}_{\{\tau_s<\tau_m\wedge \frac{1}{L}\}}|M_{t,1}-M_{t,2}|].
\end{align*}
By reordering the terms we conclude that
\begin{align}
E[\mathbf{1}_{\{\tau_s<\tau_m\wedge \frac{1}{L}\}}(M_{t+\tau}-M_{t,1})] \ge \frac{1}{2}E[\mathbf{1}_{\{\tau_s<\tau_m\wedge \frac{1}{L}\}}|M_{t,1}-M_{t,2}|]. \label{eq:FVerMainINeq}
\end{align}
Since given the environment $\mathbf{1}_{\{\tau_s<\tau_m\wedge \frac{1}{L}\}}$ is independent of $M_{t,1}$ and $M_{t,2}$ it is also independent of $|M_{t,1}-M_{t,2}|$ given the environment and we have that
\begin{align}
E[\mathbf{1}_{\{\tau_s<\tau_m\wedge \frac{1}{L}\}}|M_{t,1}-M_{t,2}|] = E_{\mathbb{P}}\left[P^\xi\left[\tau_s<\tau_m\wedge \frac{1}{L}\right]E^\xi\left[|M_{t,1}-M_{t,2}|\right]\right].\label{eq:Drei}
\end{align}
\begin{lemma}\label{lem:cx0}
We have for all $\xi$ that $P^\xi[\tau_s<\tau_m\wedge \frac{1}{L}] = \frac{(1-e^{-\frac{1}{L}(\xi(0)+1)})\xi(0)}{\xi(0)+1} =: c_{\xi(0),L}$.
\end{lemma}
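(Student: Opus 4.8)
The plan is to reduce the statement to a routine competition between two independent exponential clocks attached to the initial particle. Before any particle has moved or split, there is a single particle sitting at site $0$; while it is there it rings a ``split'' clock at rate $\xi(0)$ and a ``move'' clock at rate $1$, and these two Poisson clocks are independent of each other. Writing $\tilde\tau_m$ for the time of the first ring of the move clock of the \emph{initial} particle, we therefore have, under $P^\xi$, that $\tau_s\sim\Expo(\xi(0))$ and $\tilde\tau_m\sim\Expo(1)$ are independent. The first observation I would record is that $\{\tau_s<\tau_m\}=\{\tau_s<\tilde\tau_m\}$ up to a $P^\xi$-null set, so that $P^\xi[\tau_s<\tau_m\wedge\tfrac1L]=P^\xi[\tau_s<\tilde\tau_m\wedge\tfrac1L]$ and the matter is reduced to the two exponentials above.

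To justify this identification: on the one hand $\tau_m\le\tilde\tau_m$ always, since the first move of \emph{any} particle occurs no later than the first move of the initial particle, so $\{\tau_s<\tau_m\}\subseteq\{\tau_s<\tilde\tau_m\}$. On the other hand, on $\{\tau_s<\tilde\tau_m\}$ no particle other than the initial one exists before time $\tau_s$ (the first split occurs at $\tau_s$), and the initial particle does not jump before $\tilde\tau_m>\tau_s$; since a jump occurring exactly at $\tau_s$ is a $P^\xi$-null event, a.s. no particle jumps at or before $\tau_s$, hence $\tau_m>\tau_s$ there, i.e. $\{\tau_s<\tilde\tau_m\}\subseteq\{\tau_s<\tau_m\}$ a.s.

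With the reduction in hand, I would invoke the two standard properties of independent exponential clocks: $\tau_s\wedge\tilde\tau_m\sim\Expo(\xi(0)+1)$, and the event ``the split clock rings first'', $\{\tau_s<\tilde\tau_m\}$, is independent of the time $\tau_s\wedge\tilde\tau_m$ at which the first of the two rings, with $P^\xi[\tau_s<\tilde\tau_m]=\xi(0)/(\xi(0)+1)$. Since $\tau_s<\tilde\tau_m\wedge\tfrac1L$ is equivalent to $\tau_s<\tilde\tau_m$ and $\tau_s<\tfrac1L$, and since on $\{\tau_s<\tilde\tau_m\}$ one has $\tau_s=\tau_s\wedge\tilde\tau_m$, I can rewrite $\{\tau_s<\tilde\tau_m\wedge\tfrac1L\}=\{\tau_s<\tilde\tau_m\}\cap\{\tau_s\wedge\tilde\tau_m<\tfrac1L\}$, and the stated independence makes the probability factorize as $\frac{\xi(0)}{\xi(0)+1}\bigl(1-e^{-\frac1L(\xi(0)+1)}\bigr)$, which is exactly $c_{\xi(0),L}$. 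The only point that needs care is the set identification $\{\tau_s<\tau_m\}=\{\tau_s<\tilde\tau_m\}$ in the second paragraph — i.e. checking that passing from the global first-move time to the initial particle's first-move time does not alter the relevant event; beyond that I expect no genuine obstacle, as the rest is the textbook competing-exponentials computation.
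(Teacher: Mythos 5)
Your proof is correct and takes essentially the same route as the paper's: both reduce the event to a competition between the two independent exponential clocks (split at rate $\xi(0)$, move at rate $1$) attached to the initial particle, and then use the standard independence of the minimum and the argmin. The only difference is that you spell out the identification of $\tau_m$ with the initial particle's first move time before $\tau_s$, a point the paper's proof takes for granted.
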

\begin{proof}
Given $\xi$ we have that $\min\{\tau_s,\tau_m\}\sim \Expo(\xi(0)+1)$. Thus
\begin{align*}
P^\xi\left[\tau_s<\tau_m\wedge \frac{1}{L}\right] &= P^\xi\left[\tau_s\wedge\tau_m<\frac{1}{L}\right]\cdot P^\xi[\tau_s<\tau_m]\\
&= P^\xi\left[\tau_s\wedge \tau_m<\frac{1}{L}\right]\cdot \frac{\xi(0)}{\xi(0)+1}\\
&= \frac{(1-e^{-\frac{1}{L}(\xi(0)+1)})\xi(0)}{\xi(0)+1}.\qedhere
\end{align*}
\end{proof}
Since $c_{\xi(0),L}$ is increasing in $\xi(0)$ and strictly positive for $\xi(0)>0$ Lemma \ref{lem:cx0} and \eqref{eq:Drei} imply that $
E[\mathbf{1}_{\{\tau_s<\tau_m\wedge \frac{1}{L}\}}|M_{t,1}-M_{t,2}|]\ge c_{\mathrm{ei},L}E[|M_{t,1}-M_{t,2}|].$ 

Together with \eqref{eq:FVerMainINeq} this implies that
\begin{align}
 E[|M_{t,1}-M_{t,2}|] &\le c_{\mathrm{ei},L}^{-1}E[\mathbf{1}_{\{\tau_s<\tau_m\wedge \frac{1}{L}\}}(M_{t+\tau}-M_{t,1})]\notag\\
 &= c_{\mathrm{ei},L}^{-1}\left(E[M_{t+\tau}-M_t]+E[\mathbf{1}_{\{\tau\neq \tau_s\}}(M_{t,1}-M_{t+\tau})]\right).\label{eq:Eins}
\end{align}
Next we aim to simplify this upper bound by replacing $E[M_{t+\tau}-M_t]$ with $E[M_{t+\frac{1}{L}}-M_t]$. For this we prove that $E^\xi[M_t]$ is increasing in $t$.
\begin{lemma}\label{lem:Monoton}
The expression $E^\xi[M_t]$ is monotonically increasing in $t$ for all $\xi$. In particular $E[M_{t+\tau}-M_t]\le E[M_{t+\frac{1}{L}}-M_t]$ and $E[M_t]$ is monotonically increasing in $t$.
\end{lemma}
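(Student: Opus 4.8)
The plan is to deduce both assertions of the lemma from a single ``one-sided martingale'' estimate. Write $(\mathcal{G}_u)_{u\ge0}$ for the natural filtration of the process (the one with respect to which $\tau_s,\tau_m,\tau$ are stopping times). I claim that for every $\xi$, every $(\mathcal{G}_u)$-stopping time $\rho$, and every deterministic $T$ with $\rho\le T$ almost surely,
\[
E^\xi[M_T\mid \mathcal{G}_\rho]\ \ge\ M_\rho\qquad\text{a.s.}
\]
Granting this, the lemma follows immediately. Taking $\rho=s$ and $T=t$ with $s\le t$ gives $E^\xi[M_t\mid\mathcal{G}_s]\ge M_s$, hence $E^\xi[M_t]\ge E^\xi[M_s]$ for every $\xi$, which is the asserted monotonicity of $E^\xi[M_t]$; averaging over $\xi$ shows $E[M_t]$ is monotonically increasing. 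For the other statement, note that $t+\tau$ is a $(\mathcal{G}_u)$-stopping time, since $\{t+\tau\le u\}=\{\tau\le u-t\}\in\mathcal{G}_{u-t}\subseteq\mathcal{G}_u$, and that $t+\tau\le t+\tfrac1L=:T$ because $\tau\le\tfrac1L$. Applying the estimate with $\rho=t+\tau$ gives $E^\xi[M_{t+\frac1L}\mid\mathcal{G}_{t+\tau}]\ge M_{t+\tau}$, so $E^\xi[M_{t+\frac1L}]\ge E^\xi[M_{t+\tau}]$; averaging over $\xi$ and subtracting $E[M_t]$ yields $E[M_{t+\tau}-M_t]\le E[M_{t+\frac1L}-M_t]$.

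To prove the estimate I would combine the strong Markov property with a spine (leading-lineage) argument. On $\{\rho<\infty\}$, which here is all of $\Omega$ since $\rho\le T<\infty$, the strong Markov property says that conditionally on $\mathcal{G}_\rho$ the process after time $\rho$ is a BRWRE in the same environment $\xi$ started from the configuration $N(\rho)$. Let $Y^{\ast}\in N(\rho)$ be a particle of maximal position, chosen by some measurable tie-breaking rule, so that $Y^{\ast}_\rho=M_\rho$. In the usual construction of binary branching, at each branching event one may designate one particle as ``continuing'' and the other as newly created; let $(Z_u)_{u\ge\rho}$ be the position process of the lineage that starts at $Y^{\ast}$ at time $\rho$ and thereafter always follows the continuing particle. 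This lineage never dies, so $Z_u$ is the position of some particle of $N(u)$ for every $u\ge\rho$, whence $M_u\ge Z_u$; and a branching event does not affect the motion of the continuing particle, so, conditionally on $\mathcal{G}_\rho$, $(Z_u)_{u\ge\rho}$ is a continuous-time simple random walk with jump rate $1$ started from $M_\rho$. In particular $E^\xi[Z_T\mid\mathcal{G}_\rho]=Z_\rho=M_\rho$, since $T$ is deterministic, $T-\rho$ is $\mathcal{G}_\rho$-measurable, and the post-$\rho$ increments of $Z$ have symmetric $\pm1$ steps. Combining, $E^\xi[M_T\mid\mathcal{G}_\rho]\ge E^\xi[Z_T\mid\mathcal{G}_\rho]=M_\rho$.

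The only genuine care required is in the formalism: fixing a construction of the process in which the ``continuing'' particle at each branching is well defined, so that $(Z_u)$ is literally a rate-$1$ symmetric walk regardless of the position-dependent branching events it witnesses, together with the routine integrability and measurability checks in the conditional-mean computation for $Z_T$. I expect this bookkeeping to be the main (and mild) obstacle. Conceptually, the surviving spine whose position is a martingale dominated by $M$ is exactly what replaces the translation-invariance argument available when $\xi$ is constant, which is unavailable in the random environment because the quenched law is not shift invariant.
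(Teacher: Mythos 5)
Your proof is correct, and the underlying engine is the same as the paper's: build a single surviving lineage (a ``spine'') starting from a particle of maximal position, observe that its position is a rate-$1$ symmetric simple random walk and hence a mean-zero martingale, and bound $M$ from below by the spine. The paper runs this argument at a deterministic time $t$ (choosing a descendant uniformly at random at each split) to get monotonicity of $E^\xi[M_t]$, and then passes to $E[M_{t+\tau}-M_t]\le E[M_{t+\frac{1}{L}}-M_t]$ by citing $\tau\le \frac{1}{L}$. What you do differently is to prove the conditional, stopping-time version $E^\xi[M_T\mid\mathcal{G}_\rho]\ge M_\rho$ for any stopping time $\rho\le T$ right away. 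This buys you something concrete: the second half of the lemma involves the \emph{random} time $t+\tau$, which is not independent of the post-$t$ process, so the comparison $E^\xi[M_{t+\tau}]\le E^\xi[M_{t+\frac{1}{L}}]$ does not follow from monotonicity of $s\mapsto E^\xi[M_s]$ at deterministic times alone — one genuinely needs the strong Markov property at $t+\tau$ and a spine from there to time $t+\frac{1}{L}$, which is exactly what your general estimate with $\rho=t+\tau$ supplies. In this sense your formulation is a modest but useful strengthening that unifies the two assertions and makes the treatment of $\tau$ fully rigorous, whereas the paper's one-line justification of that step leans on the same idea implicitly. The only cosmetic difference is the spine-selection rule: you deterministically follow the ``continuing'' child at each branch, the paper picks a child uniformly at random with fresh randomness; both constructions yield a rate-$1$ simple random walk and are interchangeable here.
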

\begin{proof}
Let $s\ge 0$ and $V\in N_t$ with $V_t = M_t$. Define $V^0 := V$. Given $V^k$ define $V^{k+1}$ as follows. If the particle $V^k$ splits before time $t+s$ choose one of its direct descendants uniformly at random independently of everything else as $V^{k+1}$. Iterate this process, until $V^k$ doesn't split before time $t+s$, which will happen almost surely. We then have that $V^k_t = M_t$ and $M_{t+s}\ge V^k_{t+s}$, which implies that $M_{t+s}-M_t\ge V^k_{t+s}-V^k_t =: \Delta_s$. Since we have chosen the descendants uniformly at random independently of their displacement, $(\Delta_r)_{r\ge0}$ is a time-continuous simple random walk. This implies that $E^\xi[\Delta_s] = 0$ for all $\xi$. This in turn yields that $
E^\xi[M_{t+s}-M_t] \ge E^\xi[\Delta_s]= 0.$
 Thus we conclude that $E^\xi[M_t]$ is monotonically increasing.

The second statement follows, since $\tau\le \frac{1}{L}$, which implies
\begin{align*}
E[M_{t+\tau}-M_t] &= E_{\mathbb{P}}[E^\xi[M_{t+\tau}-M_t]]\le E_{\mathbb{P}}[E^\xi[M_{t+\frac{1}{L}}-M_t]]= E[M_{t+\frac{1}{L}}-M_t].
\end{align*}

Finally, the monotonicity of $E[M_t]$ in $t$ follows by using $E[M_t]= E_{\mathbb{P}}[E^\xi[M_t]]$ and the monotonicity of $E^\xi[M_t]$ in $t$.
\end{proof}

Using Lemma \ref{lem:Monoton} the inequality \eqref{eq:Eins} can be rewritten as
\begin{equation}
E[|M_{t,1}-M_{t,2}|]\le c_{\mathrm{ei},L}^{-1}\left(E[M_{t+\frac{1}{L}}-M_t]+E[\mathbf{1}_{\{\tau\neq \tau_s\}}(M_{t,1}-M_{t+\tau})]\right).\label{eq:RewriteEins}
\end{equation}
This proves \eqref{eq:MainInequality}.

We will handle the two summands in \eqref{eq:MainInequality} separately and find arbitrarily dense subsequences of $\N_{L,\eta}$, $\eta\in[0,1/L)$, along which the summands are bounded. By intersecting the subsequences we will be able to conclude the proof of Theorem \ref{th:Main}.

\subsection{On $E[M_{t+\frac{1}{L}}-M_t]$}
Before we can proceed we need to establish that there exists an $x^\ast\in\R_{\ge0}$ such that $\limsup_{t\to\infty} E[M_t]/t\le x^\ast$.
\begin{lemma}\label{lem:limsup}
There exists an $x^\ast\in\R$ such that $\limsup_{t\to\infty} E[M_t]/t\le x^\ast$.
\end{lemma}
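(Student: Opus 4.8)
The plan is to bound $E[M_t]$ from above by the maximum of a branching random walk in a \emph{deterministic} environment, for which the linear growth of the mean is classical. Concretely, recall that $\mathrm{es} = \mathrm{ess\,sup}\,\xi(0) < \infty$, so in every realisation $\xi$ the branching rate at each site is at most $\mathrm{es}$. I would first argue a coupling/monotonicity statement: the maximum $M_t$ under $P^\xi$ is stochastically dominated by $\widetilde M_t$, the maximum of a BRW in which every site has branching rate exactly $\mathrm{es}$ (a homogeneous, i.e. deterministic-environment, model with constant rate $\mathrm{es}$) and particles perform the same rate-$1$ continuous-time simple random walk. The coupling is the natural one: run the rate-$\mathrm{es}$ branching process and, at each branching event at a site $x$, keep the offspring only with probability $\xi(x)/\mathrm{es}$ (thinning), otherwise merge them back; the thinned process has exactly the law of the BRWRE in environment $\xi$ and has no more particles, hence no larger maximum, at every time $t$. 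Taking expectations and then $E_{\mathbb P}$ gives $E[M_t] = E_{\mathbb P}[E^\xi[M_t]] \le E[\widetilde M_t]$.

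Next I would invoke the standard first-moment (many-to-one) bound for the homogeneous model. With constant branching rate $\mathrm{es}$, the expected number of particles at time $t$ is $e^{\mathrm{es}\,t}$, and by the many-to-one lemma, for any $a>0$,
\begin{equation*}
E\big[\,\#\{Y\in N(t): Y_t \ge a t\}\,\big] = e^{\mathrm{es}\,t}\, P\big(S_t \ge a t\big),
\end{equation*}
where $(S_t)_{t\ge 0}$ is a rate-$1$ continuous-time simple random walk. A Chernoff bound gives $P(S_t\ge at)\le e^{-t\,I(a)}$ with $I(a) = \sup_{\lambda>0}\big(\lambda a - (\cosh\lambda - 1)\big) \to \infty$ as $a\to\infty$; pick $a = x^\ast$ large enough that $I(x^\ast) > \mathrm{es}$. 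Then $E[\#\{Y: Y_t\ge x^\ast t\}] \le e^{-(I(x^\ast)-\mathrm{es})t} \to 0$, so $P(\widetilde M_t \ge x^\ast t)\to 0$. To turn this into a bound on $E[\widetilde M_t]$ rather than just a probability bound, I would split $E[\widetilde M_t] \le x^\ast t + E[(\widetilde M_t - x^\ast t)^+]$ and control the tail $E[(\widetilde M_t - x^\ast t)^+] = \int_0^\infty P(\widetilde M_t \ge x^\ast t + s)\,ds$ using the same exponential bound (now with $a = x^\ast + s/t$, whose rate function only increases), which yields $E[(\widetilde M_t - x^\ast t)^+] = O(1)$ uniformly in $t$. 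Hence $\limsup_{t\to\infty} E[\widetilde M_t]/t \le x^\ast$, and combined with $E[M_t]\le E[\widetilde M_t]$ we get $\limsup_{t\to\infty} E[M_t]/t \le x^\ast$, as desired.

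The main obstacle I anticipate is making the thinning coupling fully rigorous: one must construct both processes on a common probability space so that the genealogies are nested and the inequality $M_t \le \widetilde M_t$ holds pathwise, while checking that the thinned process genuinely has the quenched law $P^\xi$ (the subtlety being that the thinning probability $\xi(x)/\mathrm{es}$ depends on the current location $x$, which is fine since branching and motion are independent, but it should be spelled out). An alternative that avoids the coupling altogether is to run the many-to-one computation directly in the random environment: under $P^\xi$ the expected number of particles at time $t$ is at most $e^{\mathrm{es}\,t}$ regardless of $\xi$ (since the rates are bounded by $\mathrm{es}$), and the displacement of a tagged particle is still a rate-$1$ simple random walk independent of $\xi$; this gives $E^\xi[\#\{Y: Y_t\ge x^\ast t\}] \le e^{\mathrm{es}\,t}P(S_t\ge x^\ast t)$ for every $\xi$, and the rest of the argument goes through verbatim after taking $E_{\mathbb P}$. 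I would likely present this second route as the cleaner proof, since it sidesteps the coupling bookkeeping while using only the boundedness $\mathrm{es}<\infty$ — exactly the hypothesis the lemma needs. Either way, the key quantitative input is the single inequality $I(x^\ast) > \mathrm{es}$ for $x^\ast$ chosen large, and everything else is a routine Chernoff/Borel–Cantelli-free tail estimate.
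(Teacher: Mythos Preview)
Your proposal is correct and takes essentially the same approach as the paper: reduce by coupling to the homogeneous branching random walk with constant branching rate $\mathrm{es}$, then use the linear growth of the mean maximum in that deterministic-environment model. The paper's own proof is in fact terser than yours --- it records the coupling inequality $E^\xi[M_t]\le E^{\mathrm{es}}[M_t]$ and then cites the homogeneous result as known --- whereas you spell out the many-to-one/Chernoff computation that underlies it.
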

\begin{proof}
By a coupling argument we know $E^\xi[M_t]\le E^{\mathrm{es}}[M_t]$ and it suffices to prove that $
\limsup_{t\to\infty} E^{\mathrm{es}}[M_t]/t<\infty.$

Since the branching rates are constant this is a known result for branching random walks, compare Exercise 1 in \cite[p. 8]{OZLN}.\qedhere
\end{proof}
\begin{lemma}\label{lem:Oben}
Fix $\delta>0$ and $\eta\in \left[0,1/L\right)$. Then, there exists a deterministic subsequence $(t_j^{\delta,\eta})_{j\ge1}$ of $\N_{L,\eta}$ so that $\limsup_{j\to \infty} t_j^{\delta,\eta}/j \le (1+\delta)/L$ and $(E[M_{t_j^{\delta,\eta}+\frac{1}{L}}-M_{t_j^{\delta,\eta}}])_{j\ge1}$ is bounded.
\end{lemma}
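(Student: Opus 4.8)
The plan is to exploit the fact, from Lemma \ref{lem:limsup}, that $\limsup_{t\to\infty} E[M_t]/t \le x^\ast < \infty$, together with the monotonicity of $E[M_t]$ in $t$ from Lemma \ref{lem:Monoton}, via a pigeonhole/telescoping argument along the lattice $\N_{L,\eta}$. This is the same mechanism as in \cite[Corollary 1, p.~9]{OZLN} and originally \cite{BDZ1}: the increments $E[M_{t+\frac1L}-M_t]$ are nonnegative and telescope to something growing at most linearly, so they cannot all be large, and by choosing blocks of increasing length one can extract a subsequence that is not too sparse along which the chosen increments are bounded.

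Concretely, I would argue as follows. Fix $\delta>0$. By Lemma \ref{lem:limsup} choose $T_0$ so that $E[M_t] \le (x^\ast+1)t$ for all $t\ge T_0$, and let $b := x^\ast+1$. For each block index $m$, consider the points of $\N_{L,\eta}$ lying in an interval $[a_m, a_m + \ell_m/L)$ consisting of $\ell_m$ consecutive lattice points $t, t+\frac1L, \dots, t+\frac{\ell_m-1}{L}$; since $E[M_\cdot]$ is monotone (Lemma \ref{lem:Monoton}), summing the increments $E[M_{t+\frac1L}-M_t]$ over these points telescopes and is bounded above by $E[M_{a_m + \ell_m/L}] - E[M_{a_m}] \le b(a_m + \ell_m/L)$. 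Hence the \emph{average} of the $\ell_m$ increments in this block is at most $b(a_m + \ell_m/L)/\ell_m$. Choosing $\ell_m$ to grow (say $\ell_m \sim \lceil \delta a_m L\rceil$, or more carefully $\ell_m$ chosen so that $b(a_m+\ell_m/L)/\ell_m$ stays bounded by some fixed constant $K$ while $\ell_m/(a_m L)\to \delta$), there is at least one point $t_m$ in the $m$-th block with $E[M_{t_m+\frac1L}-M_{t_m}] \le K$. Relabel the sequence $(t_m)_m$ as $(t_j^{\delta,\eta})_j$; by construction $(E[M_{t_j^{\delta,\eta}+\frac1L}-M_{t_j^{\delta,\eta}}])_j$ is bounded by $K$.

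It remains to check the density condition $\limsup_{j\to\infty} t_j^{\delta,\eta}/j \le (1+\delta)/L$. This is a bookkeeping matter: if the blocks tile (a tail of) $\N_{L,\eta}$, then the $j$-th selected point satisfies $t_j^{\delta,\eta} \le \eta + (\text{number of lattice points up to block } m_j)/L$, and one point is selected per block, so $j$ counts blocks; one must arrange the block lengths $\ell_m$ so that the total number of lattice points consumed by the first $j$ blocks is at most $(1+\delta)j + o(j)$, i.e. the blocks are on average only slightly longer than one lattice spacing between consecutive selected points. This forces $\ell_m$ to grow only like $\delta$ times the cumulative count — compatible with the requirement above that $\ell_m/(a_m L)$ be of order $\delta$ — so the two constraints (average increment bounded, density $\le (1+\delta)/L$) are simultaneously satisfiable; one tunes the implicit constant in $\ell_m$ against $\delta$.

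The main obstacle is the simultaneous bookkeeping: one must pick the block lengths $\ell_m$ large enough that $b(a_m+\ell_m/L)/\ell_m$ stays bounded (which needs $\ell_m \gtrsim a_m L$, i.e. blocks comparable to the current time scale) yet small enough that selecting one point per block keeps the selected subsequence dense with the sharp constant $(1+\delta)/L$ (which needs the cumulative block length to exceed the cumulative selection count by only a factor $1+\delta$). Reconciling these is exactly where the parameter $\delta$ enters, and it works precisely because $b(a_m+\ell_m/L)/\ell_m \approx bL/\delta \cdot (\ell_m/(a_m L))^{-1} \cdot (\ell_m/(a_m L))$ — making $\ell_m/(a_m L) = \delta$ makes the increment bound $\approx b/\delta \cdot$ constant, a fixed finite number, while the density cost per block is $\approx 1 + \delta$ lattice spacings per selected point. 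I expect the cleanest write-up to mirror the computation in \cite[p.~9]{OZLN} verbatim with the obvious substitutions ($n \mapsto$ lattice point of $\N_{L,\eta}$, integer increments $\mapsto$ increments of step $1/L$), so the only genuinely new content is verifying that monotonicity and the $\limsup$ bound — the two hypotheses that argument needs — are both available here, which they are by Lemmata \ref{lem:Monoton} and \ref{lem:limsup}.
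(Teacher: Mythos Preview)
Your ingredients are exactly right---monotonicity of $t\mapsto E[M_t]$ from Lemma~\ref{lem:Monoton} and the linear upper bound from Lemma~\ref{lem:limsup} are the only inputs needed---but the one-point-per-block scheme you describe cannot deliver the density bound $\limsup_j t_j/j\le (1+\delta)/L$. The tension you identify is real and is not resolved by the algebra in your last paragraph: to keep the block-average increment $b(a_m+\ell_m/L)/\ell_m$ bounded you need $\ell_m$ comparable to $a_m L$, but then blocks tile geometrically, $a_{m+1}\approx(1+\delta)a_m$, so after $j$ blocks $t_j\approx a_1(1+\delta)^j$ and $t_j/j\to\infty$. Selecting one point per block of length $\ell_m\sim\delta a_m L$ gives an exponentially sparse subsequence, not one with $t_j/j\le(1+\delta)/L$.

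The fix, which is what the paper does, is to abandon blocks and instead fix a threshold $C:=2x^\ast/(L\delta)$ and take \emph{all} good points, i.e.\ set
\[
t_{j+1}^{\delta,\eta}:=\inf\Bigl\{t\in\N_{L,\eta}:t>t_j^{\delta,\eta},\ E[M_{t+1/L}-M_t]\le C\Bigr\}.
\]
The increments along this subsequence are bounded by $C$ by definition. For density, count the bad points globally: if $K_n$ is the number of $t\in\N_{L,\eta}$ with $t<n/L+\eta$ and $E[M_{t+1/L}-M_t]>C$, then since all increments are nonnegative the telescoping sum gives $E[M_{\eta+n/L}]\ge C K_n$, whence $\limsup_n K_n/n\le x^\ast/(LC)=\delta/2$. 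This yields $\liminf_n n^{-1}|\{t\le n/L+\eta:t\text{ good}\}|\ge 1-\delta/2$ and hence $\limsup_j t_j^{\delta,\eta}/j\le L^{-1}(1-\delta/2)^{-1}\le(1+\delta)/L$. This is the argument in \cite[p.~9]{OZLN}; the key point you missed is that one selects \emph{all} sub-threshold points rather than one representative per block.
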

\begin{proof}
By Lemma \ref{lem:limsup} there exists an $x^\ast\in\R$ such that $\limsup_{t\to\infty}M_t/t\le x^\ast$. Now fix $\delta\in (0,1)$. Define $t_0^{\delta,\eta} :=0$ and
\[t_{j+1}^{\delta,\eta} := \inf\left\{t\in \N_{L,\eta}: t> t_j^{\delta,\eta}, E[(M_{t+\frac{1}{L}}-M_t)]\le \frac{2x^\ast}{L \delta}\right\}.\]

We have that $t_{j+1}^{\delta,\eta} <\infty$, since else we would have that $E[M_{t_j^{\delta,\eta}+(k+1)/L}-M_{t_j^{\delta,\eta}+k/L}]\ge \frac{2x^\ast}{\delta L}$ for all $k\in\N$ and thus that
\begin{align*}
E[M_{t_j^{\delta,\eta}+k/L}] &= \sum_{ n=0}^{k-1} E[M_{t_j^{\delta,\eta}+(n+1)/L}-M_{t_j^{\delta,\eta}+n/L}]+E[M_{t_j^{\delta,\eta}}]\ge \frac{2x^\ast k}{\delta L}+E[M_{t_j^{\delta,\eta}}],
\end{align*}
which implies that $\limsup_{t\to\infty} E[M_t]/t\ge 2x^\ast/\delta> x^\ast$, which contradicts the choice of $x^\ast$.

By definition we have that $E[M_{t_j^{\delta,\eta}+\frac{1}{L}}-M_{t_j^{\delta,\eta}}]\le  2x^\ast/(\delta L)$ for all $j\in\N$ and we are left with proving that $\limsup_{j\to \infty} t_j^{\delta,\eta}/j \le (1+\delta)/L$.

For this purpose, let $
 K_n := |\{ t\in \N_{L,\eta} : t<n/L+\eta, t\not\in\{t_j^{\delta,\eta}\}|$.
By Lemma \ref{lem:Monoton} we have that $E[M_{t+1/L}-M_t]\ge 0$ for all $t$. Thus by definition of $K_n$ we have that
\begin{equation}
E[M_{\eta+n/L}] \ge 2K_{n}x^\ast/(\delta L). \label{eq:Zwischen}
\end{equation} Lemma \ref{lem:limsup} gives that
\[
\limsup_{n\to \infty} \frac{E[M_{\eta+n/L}]}{\eta+n/L} \le x^\ast,
\] plugging this into \eqref{eq:Zwischen} gives that 
\[
\limsup_{n\to \infty} \frac{K_n}{n}\le  \limsup_{n\to \infty} \frac{\delta}{2x^\ast}\frac{E[M_{\eta+n/L}]}{n/L}\le \frac{\delta}{2}.
\] This implies that
 \[
 \liminf_{n\to\infty} \frac{|\{t\in \N_{L,\eta} : t<n/L+\eta, t\in \{t_j^{\delta,\eta}\}_{j\in\N}|\}}{n}\ge \left(1-\frac{\delta}{2}\right)
 \] which in turn yields that $
 \limsup_{n\to \infty}t_{\left\lceil \left(1-\frac{\delta}{2}\right)n\right\rceil}^{\delta,\eta}/(n/L)\le 1
$ and thus that
\[
\limsup_{n\to \infty} \frac{t_n^{\delta,\eta}}{n}\le L^{-1}\left(1-\frac{\delta}{2}\right)^{-1}\ \stackrel{\delta\in (0,1)}{\le}\  \frac{1+\delta}{L}.\qedhere
\]
\end{proof}
\subsection{On $E[\mathbf{1}_{\{\tau\neq \tau_s\}}(M_{t,1}-M_{t+\tau})]$}By definition we have that $
\mathbf{1}_{\{\tau\neq \tau_s\}} = \mathbf{1}_{\{\tau = 1/L\}}+\mathbf{1}_{\{\tau = \tau_m\}}.$

 On $\{\tau = 1/L\}$ we have that $M_{t+\tau} = M_t'$ with $M_t'$ independent of $\mathbf{1}_{\{\tau = 1/L\}}$ and distributed like $M_t$ and thus
 \[E[\mathbf{1}_{\{\tau = 1/L\}}(M_{t,1}-M_{t+\tau})] = E[\mathbf{1}_{\{\tau = 1/L\}}(M_{t,1}-M_t')] = 0.\]

  Remember that we write $M_t^x$ for the maximum starting with a single particle at location $x$ and allow $x$ to be a random variable. Using this notation we have that
\[
E[\mathbf{1}_{\{\tau = \tau_m\}}(M_{t,1}-M_{t+\tau})] =  E[\mathbf{1}_{\{\tau = \tau_m\}}(M_{t,1}-M_t^{S_1})]
\]
with $M_t^{S_1}$ independent of $\mathbf{1}_{\{\tau =\tau_m\}}$ and $S_1\sim\Unif(\{-1,1\})$ independent of everything. This implies that
\begin{align}
E[\mathbf{1}_{\{\tau = \tau_m\}}(M_{t,1}-M_{t+\tau})] = \frac{E[\mathbf{1}_{\{\tau = \tau_m\}}(M_{t,1}-M_t^{1})]+E[\mathbf{1}_{\{\tau = \tau_m\}}(M_{t,1}-M_t^{-1})]}{2}.  \label{eq:Splitwrtfjump}
\end{align}

Let $y\in\{\pm1\}$. Let $\sigma^y := \inf\{t\ge 0 : \exists V \in N^y(t)\ \text{with}\  V_t^y = 0\}$. Then we can couple $M_{t,1}$ and $M_t^y$ in a way such that
\begin{align*}
\mathbf{1}_{\{\sigma^y\le t\}}\mathbf{1}_{\{\tau_m = \tau\}} M_t^y \ge \mathbf{1}_{\{\sigma^y\le t\}}\mathbf{1}_{\{\tau_m = \tau\}}M_{t-\sigma^y,1}.
\end{align*}
This implies that
\begin{align}
&E[\mathbf{1}_{\{\tau = \tau_m\}}(M_{t,1}-M_t^y)]\notag\\
&\phantom{E}\le E[\mathbf{1}_{\{\tau = \tau_m\}}\mathbf{1}_{\{\sigma^y\le t\}}(M_{t,1}-M_{t-\sigma^y,1})]+E[\mathbf{1}_{\{\tau = \tau_m\}}\mathbf{1}_{\{\sigma^y> t\}}(M_{t,1}-M_t^y)]\notag\\
&\phantom{E}\le E[\mathbf{1}_{\{\sigma^y\le t\}}(M_{t,1}-M_{t-\sigma^y,1})]+E[\mathbf{1}_{\{\tau = \tau_m\}}\mathbf{1}_{\{\sigma^y> t\}}(M_{t,1}-M_t^y)], \label{eq:Decomp}
\end{align}
where the last inequality uses that $E^\xi[\mathbf{1}_{\{\sigma^y\le t\}}(M_{t,1}-M_{t-\sigma^y,1})]\ge 0$, which follows from the monotonicity of $E^\xi[\mathbf{1}_{\{\sigma^y\le t\}}M_{s,1}] = P^\xi[\sigma^y\le t]E^\xi[M_{s,1}]$ in $s$, see Lemma \ref{lem:Monoton}.

 We will handle the two summands in \eqref{eq:Decomp} separately, starting with the second one. However, for both summands, we will need a bound for the tail of $\sigma^y$ which the next lemma provides.

\begin{lemma}\label{lem:sigExpTail}
There exist  $c,C_1>0$ so that $P^{\xi}[\sigma^y\ge z]\le ce^{-C_1z}$ for all $z\ge 0$ and $\mathbb{P}$-a.e. $\xi$.
\end{lemma}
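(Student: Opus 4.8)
The plan is to reduce the tail bound on $\sigma^y$ to a tail bound on the hitting time of $0$ by a single continuous-time simple random walk started at $y\in\{\pm1\}$, ignoring the branching entirely. Indeed, the process starting from one particle at $y$ contains, in particular, the trajectory of the initial particle (before any of its splits), and that trajectory is simply a rate-$1$ continuous-time simple random walk $(X_s)_{s\ge0}$ with $X_0=y$. If $H_0 := \inf\{s\ge0 : X_s = 0\}$ denotes its hitting time of $0$, then $\sigma^y \le H_0$ pathwise, since the event in the definition of $\sigma^y$ is witnessed already by the ancestral particle. Hence it suffices to prove $P^\xi[H_0 \ge z] \le c e^{-C_1 z}$ for constants $c, C_1 > 0$ not depending on $\xi$; in fact, since $(X_s)$ does not see the environment at all, this is a deterministic statement about a single one-dimensional random walk and holds with the same constants for every $\xi$.

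For the hitting-time estimate I would argue as follows. The walk $(X_s)$ jumps at the points of a rate-$1$ Poisson process, and each jump is $\pm1$ with probability $1/2$. Starting from $y = \pm 1$, the walk hits $0$ as soon as it makes a jump towards $0$ while at an adjacent site; more crudely, after each jump the walk is at distance $\le 2$ from where it was, and by a standard recurrence/renewal argument the expected time to hit $0$ is finite. To get the exponential tail, partition time into unit-length intervals $[k-1,k)$, $k\in\N$, and observe that on each such interval, conditionally on the current position, there is a probability bounded below by some $p_0 > 0$ that the walk hits $0$ before the end of the interval: e.g., the walk could stay put (no Poisson jump) for the first half-interval and then, if at distance $d$ from $0$, make $d$ consecutive jumps towards $0$ within the second half-interval — but since we only need a uniform lower bound on a single interval's success probability, it is cleaner to note that for a simple random walk the position at integer times performs a genuine discrete-time random walk and $P[H_0 \le k \mid H_0 > k-1]$ is bounded below uniformly (the walk cannot run off to infinity fast enough to escape). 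Chaining this over $\lceil z \rceil$ intervals gives $P^\xi[H_0 > z] \le (1-p_0)^{\lfloor z \rfloor} \le c e^{-C_1 z}$ with $C_1 = -\log(1-p_0) > 0$ and an appropriate $c$.

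The one point requiring a little care — and the main (minor) obstacle — is justifying the uniform-in-position lower bound $p_0$ on the per-interval success probability: a priori, if the walk is very far from $0$ at the start of an interval, it is unlikely to reach $0$ within that interval, which would ruin the chaining. The resolution is that we do not need to hit $0$ from an arbitrary position in a single interval; instead, combine a \emph{drift-free recurrence} input with the Poisson clock. Concretely, one can dominate: let $T_k$ be the number of jumps in $[k-1,k)$, which is $\Pois(1)$, and note $X$ restricted to jump times is recurrent simple random walk on $\Z$, so $P[\,\exists\, \text{jump time} \le s \text{ with } X = 0\,] \to 1$ as $s\to\infty$; by monotone convergence pick $s_0$ with this probability $\ge 1/2$, and then $P^\xi[H_0 > z] \le P^\xi[H_0 > s_0 \lfloor z/s_0\rfloor] \le (1/2)^{\lfloor z/s_0 \rfloor}$, again of the desired form. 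Either packaging works; I would present the cleanest one and record the resulting $c, C_1$. Note finally that the statement ``for $\mathbb{P}$-a.e.\ $\xi$'' is automatic, indeed it holds for \emph{every} $\xi$, because the dominating object $H_0$ is environment-independent.
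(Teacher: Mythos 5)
There is a genuine gap: the reduction to a single random walk cannot work, because a one-dimensional simple random walk has a \emph{polynomial}, not exponential, tail for its hitting time of $0$. If $(X_s)_{s\ge 0}$ is a rate-$1$ continuous-time simple random walk started at $1$ and $H_0 := \inf\{s\ge 0 : X_s = 0\}$, then $P[H_0 > t] \asymp t^{-1/2}$ as $t\to\infty$ and in particular $E[H_0] = \infty$. This is the classical first-passage result for a nearest-neighbour walk on $\Z$ (reflection principle / ballot problem in discrete time; the continuous-time walk inherits the same polynomial tail because the embedded discrete-time walk is an ordinary SRW and the Poisson clock adds only $O(1)$ time-scale corrections). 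So the dominating object $H_0$ in your proposal simply does not have the tail you need, and no amount of repackaging of the chaining argument can produce one.

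Your own writeup flags the exact danger (``if the walk is very far from $0$ at the start of an interval, it is unlikely to reach $0$''), but the proposed resolution is circular. Knowing that $P_1[H_0 \le s_0] \ge 1/2$ does not give $P_1[H_0 > 2s_0]\le 1/4$: by the Markov property, $P_1[H_0 > 2s_0] = E_1[\mathbf{1}_{\{H_0 > s_0\}} P_{X_{s_0}}[H_0 > s_0]]$, and the inner probability depends on $X_{s_0}$, which on $\{H_0 > s_0\}$ is typically of order $\sqrt{s_0}$ away from $0$, so the per-step ``success'' probability degrades and the chain does not close. Indeed, the polynomial tail of $H_0$ is precisely the manifestation of this degradation, so any valid exponential bound must use more than the ancestral line.

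The missing idea is that the \emph{branching} is essential, and it is what the paper exploits. After comparing to the environment $\xi \equiv \mathrm{ei}$ by coupling, the proof reduces $P^{\mathrm{ei}}[\sigma^y \ge z]$ to $P^{\mathrm{ei}}[M_z \le 1]$, then splits time at $\varepsilon z$: with exponentially high probability the leftmost particle at time $\varepsilon z$ is above $-c^\ast \varepsilon z$, and the population size $|N(\varepsilon z)|$ is at least $e^{\mathrm{ei}\,\varepsilon z/2}$. Each of these $\ge e^{\mathrm{ei}\,\varepsilon z/2}$ independent subtrees has probability at least $p_\varepsilon>0$ of pushing its maximum up by $c^\ast(1-\varepsilon)z/2$ in the remaining time, so (for a suitable $\varepsilon$ and $z$ large) at least one of them overshoots level $1$ unless an exponentially rare event happens, giving the super-exponential term $(1-p_\varepsilon)^{e^{\mathrm{ei}\,\varepsilon z/2}}$. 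It is exactly the exponential growth of the number of independent chances that converts recurrence into an exponential tail — a feature entirely absent from a single walk.
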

 \begin{proof}

 By coupling we have that $P^\xi[\sigma^y \ge z]\le P^{\mathrm{ei}}[\sigma^y\ge z]$ for $\mathbb{P}$-a.e. $\xi$. For $\tau_y := \inf\{t\ge 0 : \exists V\in N(t) \ \text{with}\  V_t = -y\}$ one has that $P^{\mathrm{ei}}[\sigma^y\ge z] = P^{\mathrm{ei}}[\tau_y \ge z]$. Furthermore, by symmetry we have that $P^\mathrm{ei}[\tau_y\ge z] = P^\mathrm{ei}[\tau_{-1}\ge z]$. By definition of $\tau_y$ we have that
\[
P^\mathrm{ei}[\tau_{-1}\ge z] \le P^{\mathrm{ei}}[M_z \le 1].
\]
Let $\varepsilon>0$. We know that positive constants $c^\ast$, $c'$ exist for which
\begin{align*}
P^{\mathrm{ei}}\left[\min_{Y\in N(\varepsilon z)} Y_{\varepsilon z}\le -c^\ast\varepsilon z\right] \le e^{-c'\varepsilon z},
\end{align*}
compare the upper bound derived in \cite[p. 5]{OZLN}, applied to $\max_{Y\in N(\varepsilon z)}(-Y_{\varepsilon z})$.

With respect to $P^{\mathrm{ei}}$ we have that $(|N(t)|)_{t\ge0}$ is a birth-process with birth rate $\mathrm{ei}$. This implies that $|N(t)|\sim \Geo(e^{-\mathrm{ei}\cdot t})$ see Example 6.8 in \cite[p. 385/386]{IntrPM}. Thus we have that
\[P^{\mathrm{ei}}\left[|N({\varepsilon z})| > e^{\mathrm{ei}\frac{\varepsilon z}{2}}\right]\ge 1-e^{-\mathrm{ei}\frac{\varepsilon z}{2}}.\]

Finally, we know that there exists a $p_\varepsilon>0$, such that
\begin{align*}
P^{\mathrm{ei}}\left[M_{(1-\varepsilon) z} \ge \frac{c^\ast(1-\varepsilon)z}{2}\right]\ge p_\varepsilon.
\end{align*}

Now choose $\varepsilon := 1/12$ then for $z\ge 3/c^\ast$ we have that $
c^\ast(1-\varepsilon)z/2-c^\ast\varepsilon z \ge 1.$ 
This implies that by independence of the particles starting at time $\varepsilon z$
\begin{align*}
P^{\mathrm{ei}}[M_z\le 1]&\le  P^{\mathrm{ei}}[\min_{Y\in N(\varepsilon t)} Y_z\le -c^\ast \varepsilon z]+P^{\mathrm{ei}}\left[N(\varepsilon z)\le e^{\mathrm{ei}\frac{\varepsilon z}{2}}\right]+(1-p_\varepsilon)^{e^{\mathrm{ei}\frac{\varepsilon z}{2}}}\\
&\le e^{-c'\varepsilon z}+e^{-\mathrm{ei}\frac{\varepsilon z}{2}}+(1-p_\varepsilon)^{e^{\mathrm{ei}\frac{\varepsilon z}{2}}},
\end{align*}
for $z\ge 3/c^\ast$. This suffices to conclude Lemma \ref{lem:sigExpTail}
\end{proof}
Armed with Lemma \ref{lem:sigExpTail} we can handle the second summand in \eqref{eq:Decomp}.
\begin{lemma}\label{lem:BoundSideterm}
The sequence $(E[\mathbf{1}_{\{\tau = \tau_m\}} \mathbf{1}_{\{\sigma^y>k/L+\eta\}}(M_{k/L+\eta,1}-M_{k/L+\eta}^y)])_{k\in\N}$ is bounded.
\end{lemma}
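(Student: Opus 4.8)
The plan is to bound the expectation directly, without any coupling, by splitting on the event $\{\sigma^y > k/L+\eta\}$ and controlling $M_{k/L+\eta,1}$ and $M_{k/L+\eta}^y$ separately using that each grows at most linearly in time while the indicator has an exponentially small probability. Write $t_k := k/L+\eta$ for brevity. First I would drop the indicator $\mathbf{1}_{\{\tau=\tau_m\}}\le 1$ and estimate
\[
E[\mathbf{1}_{\{\tau=\tau_m\}}\mathbf{1}_{\{\sigma^y>t_k\}}(M_{t_k,1}-M_{t_k}^y)]\le E[\mathbf{1}_{\{\sigma^y>t_k\}}M_{t_k,1}]+E[\mathbf{1}_{\{\sigma^y>t_k\}}(-M_{t_k}^y)].
\]
For the second term, note $-M_{t_k}^y\le -Y_{t_k}$ for the ancestral line of any fixed particle, and in fact $-M_{t_k}^y \le |y| + (\text{a mean-zero random walk displacement})$ in absolute value is not quite the cleanest route; instead I would bound $E[\mathbf{1}_{\{\sigma^y>t_k\}}(-M_{t_k}^y)]\le E[\mathbf{1}_{\{\sigma^y>t_k\}}|M_{t_k}^y|]$ and then $|M_{t_k}^y|\le \max(M_{t_k}^y, -\min_{Y} Y_{t_k})$, each of which has at most linearly growing expectation and, more importantly, exponentially decaying upper tails (the same large-deviation bounds for branching random walk with constant rate $\mathrm{es}$, via coupling, as used in Lemma \ref{lem:sigExpTail} and Lemma \ref{lem:limsup}).

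The key quantitative input is: there exist constants $c_2, C_2>0$ so that, under the annealed law, $E[\mathbf{1}_{\{\sigma^y>t_k\}}\,|M_{t_k}^y|\,]\le c_2 e^{-C_2 t_k}\cdot(\text{polynomial in }t_k)$, and similarly with $M_{t_k,1}$ in place of $M_{t_k}^y$. This follows by Cauchy–Schwarz: $E[\mathbf{1}_{\{\sigma^y>t_k\}}|M_{t_k,1}|]\le P[\sigma^y>t_k]^{1/2}\,E[M_{t_k,1}^2]^{1/2}$, where $P[\sigma^y>t_k]\le c e^{-C_1 t_k}$ by Lemma \ref{lem:sigExpTail} (integrated over $\xi$ since the bound is uniform), and $E[M_{t_k,1}^2]^{1/2}$ grows at most linearly — again by the coupling $E^\xi[\,\cdot\,]\le E^{\mathrm{es}}[\,\cdot\,]$ for the upper part and the standard second-moment bound for branching random walk with constant branching rate. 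Since $c e^{-C_1 t_k/2}$ times a linear function of $t_k$ is bounded over $k\in\N$, the lemma follows. The same argument, with $-M_{t_k}^y$ replaced by $-\min_{Y\in N^y(t_k)}Y_{t_k}$ and the analogous lower-tail estimate, handles the negative contribution; note that $-\min_Y Y_{t_k}$ started from $y\in\{\pm1\}$ is, by translation, at most $1+(-\min$ of the walk started from $0)$, so the constant-rate large-deviation bound applies verbatim.

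I expect the only real obstacle to be bookkeeping: one must make sure all the second-moment and exponential-tail bounds are stated uniformly in $\xi$ so that the annealed expectation is controlled (this is why the coupling to the constant-rate process with rate $\mathrm{es}$, respectively $\mathrm{ei}$, is used, exactly as in Lemma \ref{lem:limsup} and Lemma \ref{lem:sigExpTail}). Once one has $P[\sigma^y>t_k]\le c e^{-C_1 t_k}$ and $\sup_k E[M_{t_k,1}^2]/t_k^2<\infty$ (and the analogous bound for $(\min_Y Y_{t_k})^2$), the estimate is immediate. A minor point: since we only need boundedness of the sequence and not a rate, one could even avoid Cauchy–Schwarz and use Hölder with any finite exponent, or simply the crude bound $E[\mathbf{1}_{\{\sigma^y>t_k\}}|M_{t_k,1}|]\le \sum_{m\ge 0} P[\sigma^y>t_k]\,P[|M_{t_k,1}|>m]$ summed against the exponential tail of $|M_{t_k,1}|$; any of these routes closes the argument.
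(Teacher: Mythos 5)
Your proposal is correct and takes essentially the same approach as the paper: bound the two summands by the exponential tail $P[\sigma^y>t]\le ce^{-C_1 t}$ from Lemma~\ref{lem:sigExpTail}, together with at-most-linear (resp.\ quadratic) growth of $E[M_t]$ (resp.\ $E[M_t^2]$) obtained by coupling to the constant-rate process. The only cosmetic difference is that the paper exploits the conditional independence of $M_{t,1}$ from $\mathbf{1}_{\{\tau=\tau_m\}}\mathbf{1}_{\{\sigma^y>t\}}$ given $\xi$ to handle the first summand with just a first-moment bound, whereas you apply Cauchy--Schwarz to both summands; and your detour through $|M_t^y|\le\max(M_t^y,-\min_Y Y_t)$, both coordinates being monotone under the coupling, is in fact the cleaner way to justify the second-moment comparison.
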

\begin{proof}
We have that $M_{t,1}$ is independent of $\mathbf{1}_{\{\tau = \tau_m\}} \mathbf{1}_{\{\sigma^y>t\}}$. Additionally because of $\limsup_{t\to \infty}E^\xi[M_{t,1}]/t\le x^\ast$ by Lemma \ref{lem:limsup}, there exists a $c^\ast\in\R$ so that $E^\xi[M_{t,1}]\le c^\ast t$ for all $t\in \N_{L,\eta}$ and $\mathbb{P}$-a.e. $\xi$. Combining these yields that
\begin{align*}
E[\mathbf{1}_{\{\tau = \tau_m\}} \mathbf{1}_{\{\sigma^y>t\}}M_{t,1}] &= E_{\mathbb{P}}[P^\xi[\tau = \tau_m, \sigma^y>t]E^\xi[M_{t,1}]]\\
&\le E_{\mathbb{P}}[P^\xi[\tau = \tau_m,\sigma^y>t]c^\ast\cdot t]\\
&\le c^\ast\cdot tE_{\mathbb{P}}[P^\xi[\sigma^y>t]]\\
&\le c\cdot e^{-C_1t}\cdot c^\ast\cdot t.
\end{align*}
This converges to $0$ for $t\to\infty$, and in particular is bounded by a constant for $t\in \N_{L,\eta}$.

 Now handle $-E[\mathbf{1}_{\{\tau = \tau_m\}}\mathbf{1}_{\{\sigma^y>t\}} M_t^y]$. We have, using Cauchy-Schwarz in the last inequality, that
 \begin{align*}
 -E[\mathbf{1}_{\{\tau = \tau_m\}}\mathbf{1}_{\{\sigma^y>t\}} M_t^y] &\le E[\mathbf{1}_{\{\tau = \tau_m\}}\mathbf{1}_{\{\sigma^y>t\}} |M_t^y|]\\
 &\le E[\mathbf{1}_{\{\sigma^y>t\}}|M_t^y|]\\
 &\le P[\sigma^y>t]^\frac{1}{2}\cdot\sqrt{E[(M_t^y)^2]}.
 \end{align*}
 We have $E[(M_t^y)^2]\le E^{\mathrm{es}}[(M_t^y)^2]$ by coupling and thus know that there exists a $c^\ast\ge 0$, such that $\lim\limits_{t\to \infty} E[(M_t^y)^2]/t^2 \le c^\ast$. Since $P[\sigma^y>t]^\frac{1}{2}\le ce^{-\frac{C_1t}{2}}$ this does imply that
 \[
 \limsup\limits_{t\to \infty} (-E[\mathbf{1}_{\{\tau = \tau_m\}}\mathbf{1}_{\{\sigma^y>t\}}M_t^y]) \le  0,
 \]
 which in turn gives that the expression in the statement of Lemma \ref{lem:BoundSideterm} is bounded.
\end{proof}
Now we proceed with the first summand in \eqref{eq:Decomp}. For this fix $\eta\in \left[0,\frac{1}{L}\right)$. We have for $t\in \N_{L,\eta}$ that
\begin{align}
E[\mathbf{1}_{\{\sigma^y\le t\}}(M_{t,1}-M_{t-\sigma^y,1})] &\le \sum_{k=1}^{\lfloor L\cdot t\rfloor} E[\mathbf{1}_{\left\{\sigma^y\in \left[\frac{k-1}{L},\frac{k}{L}\right]\right\}}(M_{t,1}-M_{t-\sigma^y,1})]\notag\\
&\qquad+E[\mathbf{1}_{\{\sigma^y\in [t-\eta,t]\}}(M_{t,1}-M_{t-\sigma^y,1})]\notag\\
&\le \sum_{k=1}^{\lfloor L\cdot t\rfloor} E[\mathbf{1}_{\left\{\sigma^y\in \left[\frac{k-1}{L},\frac{k}{L}\right]\right\}}(M_{t,1}-M_{t-\frac{k}{L},1})]\notag\\
&\qquad+E[\mathbf{1}_{\{\sigma^y\in [t-\eta,t]\}}M_{t,1}]\notag\\
&= \sum_{k=1}^{\lfloor L\cdot t\rfloor} E_{\mathbb{P}}\left[P^\xi\left[\sigma^y\in\left[\frac{k-1}{L},\frac{k}{L}\right]\right]E^\xi[M_{t,1}-M_{t-\frac{k}{L},1}]\right] \notag\\
&\qquad+E_{\mathbb{P}}[E^\xi[\sigma^y\in [t-\eta,t]]E^\xi[M_{t,1}] \label{eq:FixEins},
\end{align}
where in the second inequality we use that $E^\xi[\mathbf{1}_{\{\sigma^y\in [(k-1)/L,k/L]\}}M_{s,1}] = P^\xi[\sigma^y\in [(k-1)/L,k/L]E^\xi[M_{s,1}]$ is monotonously increasing in $s$ by Lemma \ref{lem:Monoton}.

Lemma \ref{lem:sigExpTail} and \eqref{eq:FixEins} imply that
\begin{align}
E[\mathbf{1}_{\{\sigma^y\le t\}}(M_{t,1}-M_{t-\sigma^y,1})]\le \sum_{k=1}^{\lfloor L\cdot t\rfloor} ce^{-C_1\frac{k-1}{L}}E[M_{t,1}-M_{t-\frac{k}{L},1}]+ce^{-C_1(t-\eta)}E[M_{t,1}]. \label{eq:FixZwei}
\end{align}
In particular we can handle the cases $y = 1$ and $y = -1$ at once.

Let $j\in\N$ be arbitrary but fixed and $\delta\in(0,1)$. Furthermore, take $x^\ast$ such that $\limsup_{t\to \infty}E[M_{t,1}]/t\le x^\ast$ which exists by Lemma \ref{lem:limsup}. Define $t_0^{(j,\delta,\eta)} := (j-1)/L+\eta$, 
\begin{align*}
t_{k+1}^{(j,\delta,\eta)} &:= \inf\left\{t\in \N_{L,\eta} : t>t_k^{(j,\delta,\eta)}, E[M_{t,1}-M_{t-\frac{j}{L},1}]\le \frac{2}{L\delta}x^\ast\cdot j\cdot e^{\frac{C_1}{2L}(j-1)} \right\}.
\end{align*}
In the following, we prove that this is well defined and that \eqref{eq:FixZwei} is bounded along a suitable subsequence of the sequences $(t_k^{(j,\delta,\eta)})_{k\in\N}$.

We have that $t_{k+1}^{(j,\delta,\eta)}<\infty$, since otherwise we would have that
\begin{align*}
E[M_{t_k^{(j,\delta,\eta)}+\frac{nj}{L}}] &=  \sum_{l=1}^n E[M_{t_k^{(j,\delta,\eta)}+\frac{lj}{L}}-M_{t_k^{(j,\delta,\eta)}+\frac{(l-1)j}{L}}]+E[M_{t_k^{(j,\delta,\eta)}}]\\
&\ge \frac{2}{L\delta}x^\ast nj e^{\frac{C_1}{2L}(j-1)}+E[M_{t_k^{(j,\delta,\eta)}}]
\end{align*}
which would yield $\limsup_{t\to \infty} E[M_t]/t\ge x^\ast \cdot2e^{\frac{C_1}{2L}(j-1)}/\delta> x^\ast$ contradicting the choice of $x^\ast$.  Set 
\begin{align*}
A_n^{(j,\delta,\eta)} := \left\{t\in \N_{L,\eta} : t < n/L+\eta, t\not\in \{t_k^{(j,\delta,\eta)}\}_{k\in\N}\right\},\quad  K_n^{(j,\delta,\eta)} &:= |A_n^{(j,\delta,\eta)}|.
 \end{align*} We want to estimate $K_n^{(j,\delta,\eta)}$. For this define 
\begin{align*}
\widetilde{A}_n^{(j,\delta,\eta)} &:= \left\{t\in \N_{L,\eta} :  t < n/L+\eta, E[M_{t,1}-M_{t-1/L,1}]> \frac{2}{L \delta}x^\ast e^{\frac{C_1}{2L}(j-1)}\right\}, \\\widetilde{K}_n^{(j,\delta,\eta)} &:= |\widetilde{A}_n^{(j,\delta,\eta)}|.
\end{align*} 
The following lemma compares $K_n^{(j,\delta,\eta)}$ with $\widetilde{K}_n^{(j,\delta,\eta)}$. 
\begin{lemma}\label{lem:ineqKn}
We have that
\[K_n^{(j,\delta,\eta)}\le j\widetilde{K}_n^{(j,\delta,\eta)}+j.\]
\end{lemma}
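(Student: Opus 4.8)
The plan is to show that every element of $A_n^{(j,\delta,\eta)}$ can be ``charged'' to a nearby element of $\widetilde{A}_n^{(j,\delta,\eta)}$, in such a way that each element of $\widetilde{A}_n^{(j,\delta,\eta)}$ is charged at most $j$ times, with an additive error of $j$ to account for boundary effects. The key observation is the telescoping identity
\[
E[M_{t,1}-M_{t-\frac{j}{L},1}] = \sum_{i=1}^{j} E[M_{t-\frac{i-1}{L},1}-M_{t-\frac{i}{L},1}],
\]
valid for $t\in\N_{L,\eta}$ with $t\ge (j-1)/L+\eta$, since all the intermediate times $t-\tfrac{i}{L}$ again lie in $\N_{L,\eta}$ (and in $\R_{\ge 0}$). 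By Lemma \ref{lem:Monoton} each summand on the right is nonnegative, so if $t\in A_n^{(j,\delta,\eta)}$, i.e. $t\notin\{t_k^{(j,\delta,\eta)}\}$ and hence $E[M_{t,1}-M_{t-j/L,1}] > \frac{2}{L\delta}x^\ast j e^{\frac{C_1}{2L}(j-1)}$, then by pigeonhole at least one index $i\in\{1,\dots,j\}$ satisfies
\[
E\big[M_{t-\frac{i-1}{L},1}-M_{t-\frac{i}{L},1}\big] > \frac{1}{j}\cdot\frac{2}{L\delta}x^\ast j e^{\frac{C_1}{2L}(j-1)} = \frac{2}{L\delta}x^\ast e^{\frac{C_1}{2L}(j-1)},
\]
which is exactly the defining inequality of $\widetilde{A}_n^{(j,\delta,\eta)}$ at the point $t - \tfrac{i-1}{L}$. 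This gives a map $t\mapsto t-\tfrac{i-1}{L}$ (choosing, say, the smallest such $i$) from $A_n^{(j,\delta,\eta)}$ into $\widetilde{A}_n^{(j,\delta,\eta)}\cup(\text{boundary set})$; one should double-check that $t-\tfrac{i-1}{L}<n/L+\eta$, which is immediate since $i\ge 1$ and $t<n/L+\eta$, and that $t-\tfrac{i-1}{L}\ge \eta$, which may fail only when $t$ is within distance $(j-1)/L$ of the left endpoint $\eta$ — at most $j-1$ such values of $t$, absorbed into the additive $+j$.

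The next step is to bound the multiplicity of the charging map. If $s\in\widetilde{A}_n^{(j,\delta,\eta)}$, then the only elements $t\in A_n^{(j,\delta,\eta)}$ that can map to $s$ are of the form $t = s+\tfrac{i-1}{L}$ with $i\in\{1,\dots,j\}$, hence $t\in\{s,s+\tfrac1L,\dots,s+\tfrac{j-1}{L}\}$, a set of size $j$. Therefore
\[
K_n^{(j,\delta,\eta)} = |A_n^{(j,\delta,\eta)}| \le j\,|\widetilde{A}_n^{(j,\delta,\eta)}| + j = j\widetilde{K}_n^{(j,\delta,\eta)} + j,
\]
which is the claim. (The $+j$ collects the at most $j-1$ left-boundary values $t\in[\eta,(j-1)/L+\eta)$ together with any slack; one can be generous here since the bound only needs the constant $j$.)

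The main thing to be careful about is the bookkeeping at the left boundary: the telescoping identity and the pigeonhole argument require all points $t-\tfrac{i}{L}$, $i=0,\dots,j$, to be legitimate (nonnegative, in $\N_{L,\eta}$). For $t\ge t_0^{(j,\delta,\eta)} = (j-1)/L+\eta$ this is automatic, and only finitely many (at most $j$) values of $t$ in $A_n^{(j,\delta,\eta)}$ fail it; these are precisely what the additive term $j$ is there to absorb. I do not expect any analytic difficulty — the estimate is purely combinatorial once the monotonicity from Lemma \ref{lem:Monoton} is invoked — so the ``hard part'' is really just making sure the index shifts and boundary cases are handled cleanly so that the multiplicity bound $j$ and the additive error $j$ are both honestly justified.
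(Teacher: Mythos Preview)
Your proposal is correct and follows essentially the same approach as the paper's own proof: both argue via the telescoping sum $E[M_{t,1}-M_{t-j/L,1}]=\sum_{i=1}^{j}E[M_{t-(i-1)/L,1}-M_{t-i/L,1}]$ that any $t\in A_n^{(j,\delta,\eta)}$ with $t\ge j/L+\eta$ must have some $t-(i-1)/L\in\widetilde{A}_n^{(j,\delta,\eta)}$, define a map $\varphi$ from $A_n^{(j,\delta,\eta)}$ into $\widetilde{A}_n^{(j,\delta,\eta)}$ plus a boundary symbol with fibres of size at most $j$, and conclude $K_n^{(j,\delta,\eta)}\le j\widetilde{K}_n^{(j,\delta,\eta)}+j$. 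The only cosmetic differences are that the paper phrases the key step contrapositively rather than as pigeonhole, and that your appeal to Lemma~\ref{lem:Monoton} for nonnegativity of the increments is not actually needed (if a sum of $j$ real numbers exceeds $j$ times a threshold, one of them does too, regardless of sign).
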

\begin{proof}
Let $t\in \N_{L,\eta}$ with $j/L+\eta\le t< n/L+\eta $ such that $t,t-\frac{1}{L},\dots,t-\frac{j-1}{L}\not\in \widetilde{A}_n^{(j,\delta,\eta)}$. This implies that for $k\in\{0,\dots, j-1\}$
\[
E[M_{t-k/L,1}-M_{t-(k+1)/L,1}]\le \frac{2}{L\delta}x^\ast e^{\frac{C_1}{2L}(j-1)}.
\] Summing these inequalities gives that
\[
E[M_{t,1}-M_{t-j/L,1}]\le j\frac{2}{L\delta}x^\ast e^{\frac{C_1}{2L}(j-1)},
\] i.e. $t\not\in A_n^{(j,\delta,\eta)}$. So for $j/L+\eta\le t\in A_n^{(j,\delta,\eta)}$, there exists a $\varphi(t)\in \left\{t-\frac{j-1}{L},\dots, t\right\}\cap \widetilde{A}_n^{(j,\delta,\eta)}$. If there are multiple elements in the intersection, $\varphi(t)$ is chosen minimal. Also let $\varphi\left(\eta+\frac{1}{L}\right) = \dots =\varphi\left(\eta+\frac{j-1}{L}\right) =: \dagger$, since $\eta+\frac{1}{L},\dots, \eta+\frac{j-1}{L}$ are always in $A_n^{(j,\delta,\eta)}$. This then yields a map
\[
\varphi : A_n^{(j,\delta,\eta)}\to \widetilde{A}_n^{(j,\delta,\eta)}\cup\{\dagger\}, t\mapsto \varphi(t).
\]
We have that $|\varphi^{-1}(t')|\le j$ for all $t'\in \widetilde{A}_n^{(j,\delta,\eta)}\cup\{\dagger\}$ as well as that
\[
A_n^{(j,\delta,\eta)} = \varphi^{-1}(\widetilde{A}_n^{(j,\delta,\eta)}\cup \{\dagger\}) = \bigcup_{t'\in \widetilde{A}_n^{(j,\delta,\eta)}\cup\{\dagger\}}\varphi^{-1}(t')
\]
by definition. This implies that
\[
K_n^{(j,\delta,\eta)} = |A_n^{(j,\delta,\eta)}|\le \sum_{t'\in \widetilde{A}_n^{(j,\delta,\eta)}\cup\{\dagger\}}|\varphi^{-1}(t')|\le |\widetilde{A}_n^{(j,\delta,\eta)}\cup\{\dagger\}|\cdot j = j\widetilde{K}_n^{(j,\delta,\eta)}+j.\qedhere
\]
\end{proof}
\begin{corollary}\label{cor:FixjDenseSub}
We have that
\[
\limsup_{n\to \infty} \frac{K_n^{(j,\delta,\eta)}}{n}\le \frac{j\delta}{2e^{\frac{C_1}{2L}(j-1)}}.
\]
\end{corollary}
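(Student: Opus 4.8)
The plan is to read off Corollary \ref{cor:FixjDenseSub} from Lemma \ref{lem:ineqKn} together with an asymptotic density bound on $\widetilde{K}_n^{(j,\delta,\eta)}$. Since Lemma \ref{lem:ineqKn} gives $K_n^{(j,\delta,\eta)}\le j\widetilde{K}_n^{(j,\delta,\eta)}+j$, dividing by $n$, letting $n\to\infty$ and pulling the factor $j$ out of the $\limsup$ (the additive $+j$ is $o(n)$) reduces the whole statement to showing
\[
\limsup_{n\to\infty}\frac{\widetilde{K}_n^{(j,\delta,\eta)}}{n}\le \frac{\delta}{2e^{\frac{C_1}{2L}(j-1)}}.
\]

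To obtain this density bound I would repeat the counting argument already used in the proof of Lemma \ref{lem:Oben}. First note that $E[M_{t,1}]=E[M_t]$, and that by Lemma \ref{lem:Monoton} every increment $E[M_s-M_{s-1/L}]$ with $s\in\N_{L,\eta}$, $s\ge 1/L+\eta$, is nonnegative. Telescoping these increments over $s\in\N_{L,\eta}\cap[1/L+\eta,\,n/L+\eta)$ gives $\sum_s E[M_s-M_{s-1/L}]\le E[M_{\eta+n/L}]$, where we used $E[M_\eta]=E_{\mathbb{P}}[E^\xi[M_\eta]]\ge E_{\mathbb{P}}[E^\xi[M_0]]=0$ (again Lemma \ref{lem:Monoton}). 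Restricting the sum to those $s$ lying in $\widetilde{A}_n^{(j,\delta,\eta)}$, for each of which the corresponding term exceeds the threshold $\frac{2}{L\delta}x^\ast e^{\frac{C_1}{2L}(j-1)}$, yields
\[
\widetilde{K}_n^{(j,\delta,\eta)}\cdot\frac{2}{L\delta}x^\ast e^{\frac{C_1}{2L}(j-1)}\le E[M_{\eta+n/L}].
\]
Now Lemma \ref{lem:limsup} supplies $\limsup_{n\to\infty}E[M_{\eta+n/L}]/(n/L)\le x^\ast$; substituting this into the previous display and rearranging produces exactly the claimed bound on $\limsup_n\widetilde{K}_n^{(j,\delta,\eta)}/n$, and then Lemma \ref{lem:ineqKn} closes the argument.

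I do not expect a serious obstacle: this is essentially the same mechanism as Lemma \ref{lem:Oben}, now run with a $j$-dependent threshold and then corrected through Lemma \ref{lem:ineqKn}. The only point requiring a little care is the boundary bookkeeping — the finitely many smallest elements of $\N_{L,\eta}$ (for which $M_{t-1/L}$ would sit at a nonpositive time) should be excluded or handled by the convention $M_s:=0$ for $s\le 0$, and the $+j$ term of Lemma \ref{lem:ineqKn} must be carried along; both contribute only $O(1)$ to $K_n^{(j,\delta,\eta)}$ and hence vanish in the $\limsup$ after division by $n$.
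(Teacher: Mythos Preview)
Your proposal is correct and follows essentially the same route as the paper: bound $\widetilde{K}_n^{(j,\delta,\eta)}$ via the nonnegative one-step increments (Lemma~\ref{lem:Monoton}) telescoped against $E[M_{\eta+n/L}]$, apply Lemma~\ref{lem:limsup} to get $\limsup_n \widetilde{K}_n^{(j,\delta,\eta)}/n\le \delta/(2e^{\frac{C_1}{2L}(j-1)})$, and then invoke Lemma~\ref{lem:ineqKn}. Your remarks on the boundary terms and the additive $+j$ are exactly the right checks; the paper absorbs these implicitly.
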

\begin{proof}
By Lemma \ref{lem:Monoton} we have that $E[M_{t,1}-M_{t-1/L,1}] \ge 0$ for all $t\ge 1/L$. Together with the definition of $\widetilde{K}_n^{(j,\delta, n)}$ this yields that $
E[M_{\eta+n/L}]> 2(L\delta)^{-1}x^\ast e^{\frac{C_1}{2L}(j-1)} \widetilde{K}_n^{(j,\delta,\eta)}.$ Using Lemma \ref{lem:limsup} this implies that
\[\limsup_{n\to\infty} \frac{\widetilde{K}_n^{(j,\delta,\eta)}}{n} \le \frac{\delta}{2e^{\frac{C_1}{2L}(j-1)}}.\] With Lemma \ref{lem:ineqKn} this yields the statement.
\end{proof}
Now define $A^{(j,\delta,\eta)} := \{t\in \N_{L,\eta} : t\not\in \{t_k^{(j,\delta,\eta)}\}_{k\in\N}\}$ and
\[
B^{\delta,\eta} := \left\{t\in \N_{L,\eta} : t\not\in \bigcup_{j=1}^{\left\lceil \frac{2L}{C_1}\log(t)\right\rceil} A^{(j,\delta,\eta)}\right\}.
\]
In the next two lemmata we  bound $E[\mathbf{1}_{\{\sigma^y\le t\}}(M_{t,1}-M_{t-\sigma^y,1})]$ for $t\in B^{\delta,\eta}$ and examine how dense $B^{\delta,\eta}\subseteq \N_{L,\eta}$ is.
\begin{lemma}\label{lem:bounded}
For $t\in B^{\delta,\eta}$ we have that $
E[\mathbf{1}_{\{\sigma^y\le t\}}(M_{t,1}-M_{t-\sigma^y,1})]\le C,$
with $C$ independent of $t$.
\end{lemma}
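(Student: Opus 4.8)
The plan is to bound the right-hand side of \eqref{eq:FixZwei}, namely
\[
E[\mathbf{1}_{\{\sigma^y\le t\}}(M_{t,1}-M_{t-\sigma^y,1})]\le \sum_{k=1}^{\lfloor L\cdot t\rfloor} ce^{-C_1\frac{k-1}{L}}E[M_{t,1}-M_{t-\frac{k}{L},1}]+ce^{-C_1(t-\eta)}E[M_{t,1}],
\]
by splitting the sum at the index $k_0(t):=\lceil \frac{2L}{C_1}\log t\rceil$, which is exactly the cutoff appearing in the definition of $B^{\delta,\eta}$; all estimates will be uniform in $y\in\{\pm1\}$, since \eqref{eq:FixZwei} is. Two facts will be used throughout. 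First, by Lemma \ref{lem:limsup} there is a constant $c^\ast$ with $E[M_{t,1}]\le c^\ast t$ for every $t$. Second, by Lemma \ref{lem:Monoton} the map $s\mapsto E^\xi[M_s]$ is nondecreasing with $E^\xi[M_0]=0$, hence $E^\xi[M_{s,1}]\ge 0$ for $s\ge0$; since $k\le\lfloor Lt\rfloor$ forces $t-k/L\ge 0$, this gives the crude estimate $E[M_{t,1}-M_{t-k/L,1}]\le E[M_{t,1}]\le c^\ast t$ for every summand.

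For the high-index range $k>k_0(t)$ (assuming $t\ge1$) I would use only the crude estimate. Then $k-1\ge k_0(t)\ge \frac{2L}{C_1}\log t$, so $e^{-C_1(k-1)/L}\le t^{-2}$, and summing the resulting geometric series gives a total contribution at most $\frac{cc^\ast}{1-e^{-C_1/L}}\,t^{-1}$, hence at most $\frac{cc^\ast}{1-e^{-C_1/L}}$ uniformly in $t\ge1$. The stray term satisfies $ce^{-C_1(t-\eta)}E[M_{t,1}]\le cc^\ast t\,e^{-C_1(t-\eta)}$, which is bounded in $t$ as well. Finally, for the finitely many $t\in\N_{L,\eta}$ with $t<1$, \eqref{eq:FixZwei} together with the crude estimate and $\lfloor Lt\rfloor\le L$ directly produces a finite bound.

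For the low-index range $k\le k_0(t)$ I would use that $t\in B^{\delta,\eta}$. By definition this means $t\notin A^{(k,\delta,\eta)}$ for every $k\le k_0(t)$, that is, $t\in\{t_l^{(k,\delta,\eta)}\}_{l\in\N}$; moreover $k\le\lfloor Lt\rfloor$ and $\eta<1/L$ give $t\ge k/L>(k-1)/L+\eta=t_0^{(k,\delta,\eta)}$, so $t=t_l^{(k,\delta,\eta)}$ for some $l\ge1$, and the defining inequality of that sequence yields
\[
E[M_{t,1}-M_{t-k/L,1}]\le \frac{2}{L\delta}x^\ast\,k\,e^{\frac{C_1}{2L}(k-1)}.
\]
Plugging this into the $k$-th summand of \eqref{eq:FixZwei}, the two exponential factors combine to $e^{-\frac{C_1}{2L}(k-1)}$, so the low-index part is at most $\frac{2cx^\ast}{L\delta}\sum_{k\ge1}k\,e^{-\frac{C_1}{2L}(k-1)}$, a finite number independent of $t$. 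Taking $C$ to be the sum of these contributions (and of the small-$t$ bound) completes the argument.

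The only genuinely delicate point is the bookkeeping in the low-index range: one must make sure that for each $k\le k_0(t)$ occurring in \eqref{eq:FixZwei} the difference $E[M_{t,1}-M_{t-k/L,1}]$ is actually controlled, which requires both that $t$ strictly exceeds the anchor $t_0^{(k,\delta,\eta)}$ of the $k$-th sequence (so that the inequality defining $t_l^{(k,\delta,\eta)}$, valid only for $l\ge1$, applies) and that the window $j=k$ used to build that sequence coincides with the shift $k/L$ appearing in \eqref{eq:FixZwei}. Everything else reduces to summing (arithmetico-)geometric series, the decay $e^{-C_1(k-1)/L}$ from Lemma \ref{lem:sigExpTail} comfortably beating the growth $e^{\frac{C_1}{2L}(k-1)}$ that we permit ourselves in the differences.
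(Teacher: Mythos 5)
Your proposal is correct and follows essentially the same strategy as the paper's proof: splitting the sum in \eqref{eq:FixZwei} at $\lceil \frac{2L}{C_1}\log t\rceil$, using the definition of $B^{\delta,\eta}$ for the low-index terms and the crude linear bound $E[M_{t,1}]\le c^\ast t$ together with exponential decay for the high-index terms and the stray term. Your explicit verification that $t>t_0^{(k,\delta,\eta)}$ (so that the defining inequality of the sequence genuinely applies) is a small bookkeeping point the paper leaves implicit; otherwise the two arguments differ only in cosmetic details of how the high-index geometric series is estimated.
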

\begin{proof}
For $t\in B^{\delta,\eta}$ and all $j\in \left\{1,\dots, \left\lceil \frac{2 L}{C_1}\log(t)\right\rceil\right\}$ we have, by definition, that $E[M_{t,1}-M_{t-j/L,1}]\le 2x^\ast je^{\frac{C_1}{2L}(j-1)}/(\delta L)$. Furthermore, we know that there exists a $c^\ast\ge 0$ such that $E[M_{t,1}-M_{t-j/L,1}]\le E[M_{t,1}] \le c^\ast \cdot t$ for all $t\in \N_{L,\eta}$, since $\limsup_{t\to\infty} E[M_t]/t\le x^\ast$ by Lemma \ref{lem:limsup}. These inequalities as well as \eqref{eq:FixZwei} imply that for $t\in B^{\delta,\eta}$
\begin{align*}
E[\mathbf{1}_{\{\sigma^y\le t\}}(M_{t,1}-M_{t-\sigma^y,1})]&\le \sum_{k=1}^{\lfloor L\cdot t\rfloor} ce^{-C_1\frac{k-1}{L}}[M_{t,1}-M_{t-k/L,1}]+ce^{-C_1(t-\eta)}E[M_{t,1}]\\
&\le \sum_{k=1}^{\lceil \frac{2L}{C_1}\log(t)\rceil} ce^{-C_1\frac{k-1}{L}}\frac{2}{\delta L}x^\ast k e^{\frac{C_1}{2L}(k-1)}\\
&\qquad+\sum_{k=\lceil \frac{2L}{C_1}\log(t)\rceil+1}^{\lfloor L\cdot t\rfloor}  ce^{-C_1\frac{k-1}{L}} c^\ast\cdot t+ce^{-C_1(t-\eta)}c^\ast\cdot t\\
&\le \sum_{k=1}^\infty ce^{-\frac{C_1}{2L}(k-1)}\frac{2}{\delta L}x^\ast k+\hskip-.22cm\sum_{k=\lceil \frac{2L}{C_2}\log(t)\rceil+1}^{\lfloor L\cdot t\rfloor} ce^{-C_1\frac{k-1}{L}}c^\ast e^{\frac{C_1}{2L}k}+c'\\
&\le \widetilde{c}\frac{2x^\ast}{\delta L}+\sum_{k=1}^\infty ce^{-\frac{C_1}{2L}k+\frac{C_1}{L}}c^\ast+c'\\
&\le \widetilde{c}\left(\frac{2x^\ast}{\delta}+c^\ast\right)+c',
\end{align*}
where the exact value of $\widetilde{c}$ changes from line to line and $c'$ is just a constant, which bounds $ce^{-C_1(t-\eta)}c^\ast t$ for all $t\ge 0$. This proves Lemma \ref{lem:bounded}.
\end{proof}
\begin{lemma}\label{lem:dense}
Let $(t_k^{\delta,\eta})_{k\in\N}$ be a monotonically increasing enumeration of $B^{\delta,\eta}$ and  $\delta e^{\frac{C_1}{L}}/(e^\frac{C_1}{2L}-1)^2\in (0,1)$. Then, one has that
\[
\limsup_{n\to \infty} \frac{t_n^{\delta,\eta}}{n}\le \frac{1}{L}+\delta \frac{e^{\frac{C_1}{L}}}{L(e^\frac{C_1}{2L}-1)^2}.
\]
\end{lemma}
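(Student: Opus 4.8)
The plan is to estimate the density of the complement $\N_{L,\eta}\setminus B^{\delta,\eta}$ by a union bound over the sets $A^{(j,\delta,\eta)}$ whose union defines $B^{\delta,\eta}$, and then convert a density statement into the desired $\limsup$ on $t_n^{\delta,\eta}/n$. First I would observe that, by definition of $B^{\delta,\eta}$, a point $t\in\N_{L,\eta}$ fails to lie in $B^{\delta,\eta}$ precisely when $t\in A^{(j,\delta,\eta)}$ for some $j\le\lceil\frac{2L}{C_1}\log t\rceil$. Hence, writing $D_n := |\{t\in\N_{L,\eta} : t< n/L+\eta,\ t\notin B^{\delta,\eta}\}|$, we get the bound $D_n \le \sum_{j=1}^{\lceil\frac{2L}{C_1}\log(n/L+\eta)\rceil} K_n^{(j,\delta,\eta)}$, since for every such $t$ one has $j\le \lceil\frac{2L}{C_1}\log t\rceil\le \lceil\frac{2L}{C_1}\log(n/L+\eta)\rceil$ and so $t$ is counted among the $A^{(j,\delta,\eta)}$ with $j$ in that range (the sets $A^{(j,\delta,\eta)}$ defining $B^{\delta,\eta}$ truncate at $j=\lceil\frac{2L}{C_1}\log t\rceil$, which is why only finitely many $j$ matter here).

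Next I would invoke Corollary \ref{cor:FixjDenseSub}, which gives $\limsup_{n\to\infty}K_n^{(j,\delta,\eta)}/n\le \frac{j\delta}{2}e^{-\frac{C_1}{2L}(j-1)}$ for each fixed $j$. The geometric series $\sum_{j=1}^\infty j e^{-\frac{C_1}{2L}(j-1)}$ converges, with value $\left(1-e^{-\frac{C_1}{2L}}\right)^{-2} = e^{\frac{C_1}{L}}(e^{\frac{C_1}{2L}}-1)^{-2}$. The slightly delicate point is interchanging the $\limsup$ with the (growing) sum over $j$: since the upper range $\lceil\frac{2L}{C_1}\log(n/L+\eta)\rceil$ tends to infinity with $n$, one cannot directly apply finite additivity of $\limsup$. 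I would handle this by a standard truncation: fix $J\in\N$, split $D_n/n$ into the part with $j\le J$ (a finite sum, to which $\limsup$-subadditivity applies) and a tail $j>J$; for the tail use the crude deterministic bound $K_n^{(j,\delta,\eta)}\le$ (number of $t<n/L+\eta$ with $t\notin\{t_k^{(j,\delta,\eta)}\}$), estimated via Lemma \ref{lem:ineqKn} and the definition of $\widetilde K_n^{(j,\delta,\eta)}$ together with $E[M_{\eta+n/L}]\le c^\ast(n/L+\eta)$ from Lemma \ref{lem:limsup}, to get a bound of the same form $\frac{j\delta}{2}e^{-\frac{C_1}{2L}(j-1)}$ uniformly in $n$ large. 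Letting first $n\to\infty$ and then $J\to\infty$ yields
\[
\limsup_{n\to\infty}\frac{D_n}{n}\ \le\ \frac{\delta}{2}\sum_{j=1}^\infty j e^{-\frac{C_1}{2L}(j-1)}\ =\ \frac{\delta}{2}\cdot\frac{e^{\frac{C_1}{L}}}{(e^{\frac{C_1}{2L}}-1)^2}\ =:\ \frac{\rho}{2},
\]
with $\rho = \delta e^{\frac{C_1}{L}}/(e^{\frac{C_1}{2L}}-1)^2\in(0,1)$ by hypothesis.

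Finally I would convert this into the $\limsup$ bound on $t_n^{\delta,\eta}/n$, exactly as at the end of the proof of Lemma \ref{lem:Oben}. The number of elements of $B^{\delta,\eta}$ below $n/L+\eta$ is $|\{t\in\N_{L,\eta}:t<n/L+\eta\}| - D_n = n - D_n$, so $\liminf_{n\to\infty}\frac{1}{n}|\{t\in B^{\delta,\eta}:t<n/L+\eta\}|\ge 1-\rho/2$. Hence for any $\rho'>\rho/2$ and all large $n$ the $\lceil(1-\rho')n\rceil$-th element of $B^{\delta,\eta}$ is $<n/L+\eta$, i.e. $t_{\lceil(1-\rho')n\rceil}^{\delta,\eta}<n/L+\eta$, which gives $\limsup_{n\to\infty} t_n^{\delta,\eta}/(n/L)\le (1-\rho/2)^{-1}$ after reindexing; using $\rho\in(0,1)$ and $(1-\rho/2)^{-1}\le 1+\rho$ we obtain $\limsup_{n\to\infty} t_n^{\delta,\eta}/n\le \frac{1}{L}(1+\rho) = \frac{1}{L}+\delta\frac{e^{\frac{C_1}{L}}}{L(e^{\frac{C_1}{2L}}-1)^2}$, as claimed. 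The main obstacle is the interchange of $\limsup$ and the $j$-sum with a moving upper limit; everything else is bookkeeping of the kind already carried out in Lemma \ref{lem:Oben}.
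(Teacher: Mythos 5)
Your proposal is correct and follows essentially the same route as the paper: the same union-bound decomposition $A_n^{\delta,\eta}\subseteq\bigcup_j A_n^{(j,\delta,\eta)}$, the same per-$j$ density bound from Corollary~\ref{cor:FixjDenseSub}, the same uniform dominating bound obtained via Lemma~\ref{lem:ineqKn} and $E[M_{n/L+\eta}]\le c^\ast(n/L)$, and the same final conversion of the density estimate into a $\limsup$ bound on $t_n^{\delta,\eta}/n$. The only cosmetic difference is that where the paper invokes Fatou's lemma (in its reverse, $\limsup$ form, which needs exactly the summable majorant you construct), you carry out the interchange of $\limsup$ with the growing sum by an explicit $J$-truncation; the two are the same argument.
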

\begin{proof}
Consider $A_n^{\delta,\eta} := \left\{t\in \N_{L,\eta} : t<n/L+\eta, t\not\in B^{\delta,\eta}\right\}$, $K_n^{\delta,\eta} := |A_n^{\delta,\eta}|$. Then
\[
A_n^{\delta,\eta}\subseteq  \bigcup_{j=1}^{\left\lceil \frac{2 L}{C_1}\log(n)\right\rceil} A_n^{(j,\delta,\eta)}
\]
which implies that $
K_n^{\delta,\eta}\le \sum_{j=1}^{\left\lceil \frac{2L}{C_1}\log(n)\right\rceil} |A_n^{(j,\delta,\eta)}|.$
This gives that
\[
\frac{K_n^{\delta,\eta}}{n}\le \sum_{j=1}^{\left\lceil \frac{2L}{C_1}\log(n)\right\rceil} \frac{K_n^{(j,\delta,\eta)}}{n} = \sum_{j=1}^\infty \mathbf{1}_{\left\{j\le \left\lceil \frac{2L}{C_1}\log(n)\right\rceil\right\}} \frac{K_n^{(j,\delta,\eta)}}{n}.
\]
We now want to apply Fatou's lemma and for this need to bound the summands for constant $j$. Thus let $n,j\in\N$. We know, by Lemma \ref{lem:ineqKn}, that $K_n^{(j,\delta,\eta)}\le j\widetilde{K}_n^{(j,\delta,\eta)}+j$ and, by the calculation in Corollary \ref{cor:FixjDenseSub}, that
\[\widetilde{K}_n^{(j,\delta,\eta)}\le E[M_{\frac{n}{L}+\eta}]\cdot \frac{L\delta}{2 x^\ast e^{\frac{C_1}{2L}(j-1)}}.\]
By Lemma \ref{lem:limsup} there is a $c^\ast$ such that $E[M_t]\le c^\ast (t-\eta)$ for all $t\in\N_{L,\eta}$, this implies that
\begin{align*}
\frac{K_n^{(j,\delta,\eta)}}{n}\le \frac{jc^\ast \delta}{2x^\ast e^{\frac{C_1}{2L}(j-1)}}+\frac{j}{n}.
\end{align*}
This implies that
\[
\mathbf{1}_{\left\{j\le \left\lceil \frac{2L}{C_1}\log(n)\right\rceil\right\}} \frac{K_n^{(j,\delta,\eta)}}{n}\le \frac{jc^\ast\delta}{2x^\ast e^{\frac{C_1}{2L}(j-1)}}+\frac{j}{e^{\frac{C_1}{2L}(j-1)}}.
\]
 Since the right hand side is summable, this implies, by Fatou's Lemma, that
\begin{align*}
\limsup_{n\to\infty} \frac{K_n^{\delta,\eta}}{n}&\le \sum_{j=1}^\infty \limsup_{n\to \infty} \mathbf{1}_{\left\{j\le \left\lceil \frac{2L}{C_1}\log(n)\right\rceil\right\}}\frac{K_n^{(j,\delta,\eta)}}{n}\\
&\le \sum_{j=1}^\infty \frac{j\delta}{2e^{\frac{C_1}{2L}(j-1)}}\\
&= \frac{\delta}{2}\cdot \frac{e^{\frac{C_1}{L}}}{(e^\frac{C_1}{2L}-1)^2},
\end{align*}
where the second to last inequality uses Corollary \ref{cor:FixjDenseSub}.

This implies that
\[
\liminf_{n\to \infty} \frac{|\{t\in \N_{L,\eta}: t<n/L+\eta, t\in B^{\delta,\eta}\}|}{n} \ge \left(1-\frac{\delta}{2}\cdot \frac{e^{\frac{C_1}{L}}}{(e^\frac{C_1}{2L}-1)^2}\right) =: c_\delta.
\]
This in turn implies for $\delta e^{\frac{C_1}{L}}/(e^\frac{C_1}{2L}-1)^2\in (0,1)$ that $
\limsup_{n\to \infty} t^{\delta,\eta}_{\left\lceil c_\delta\cdot n\right\rceil}n/L)\le 1 $
and thus we have that
\begin{equation*}
\limsup_{n\to \infty} \frac{t^{\delta,\eta}_n}{n}\le L^{-1}c_\delta^{-1}\le  \frac{1}{L}+\delta \frac{e^{\frac{C_1}{L}}}{L(e^\frac{C_1}{2L}-1)^2}.\qedhere
\end{equation*}
\end{proof}
Summed up we have
\begin{corollary}\label{cor:Unten}
Fix $\delta>0$ and $\eta\in [0,1/L)$. Then, there exists a deterministic subsequence $(s_k^{\delta,\eta})_{k\in\N}$ of $\N_{L,\eta}$ such that $(E[\mathbf{1}_{\{\tau\neq \tau_s\}}(M_{s_k^\delta,1}-M_{s_k^\delta+\tau})])_{k\in\N}$ is bounded and $\limsup_{k\to \infty} s_k^{\delta,\eta}/k\le (1+\delta)/L$.
\end{corollary}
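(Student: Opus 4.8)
The plan is to obtain Corollary \ref{cor:Unten} by assembling the decomposition opening this subsection with Lemma \ref{lem:BoundSideterm}, Lemma \ref{lem:bounded} and Lemma \ref{lem:dense}. Recall that $\mathbf{1}_{\{\tau\neq\tau_s\}}=\mathbf{1}_{\{\tau=1/L\}}+\mathbf{1}_{\{\tau=\tau_m\}}$, that the $\{\tau=1/L\}$ contribution to $E[\mathbf{1}_{\{\tau\neq\tau_s\}}(M_{t,1}-M_{t+\tau})]$ vanishes, and that by \eqref{eq:Splitwrtfjump} the $\{\tau=\tau_m\}$ contribution equals the average over $y\in\{\pm1\}$ of $E[\mathbf{1}_{\{\tau=\tau_m\}}(M_{t,1}-M_t^y)]$. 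By \eqref{eq:Decomp} each such term is at most $E[\mathbf{1}_{\{\sigma^y\le t\}}(M_{t,1}-M_{t-\sigma^y,1})]+E[\mathbf{1}_{\{\tau=\tau_m\}}\mathbf{1}_{\{\sigma^y>t\}}(M_{t,1}-M_t^y)]$. The second summand is bounded uniformly over $t\in\N_{L,\eta}$ by Lemma \ref{lem:BoundSideterm}, and the first summand lies in $[0,C]$ for every $t\in B^{\delta',\eta}$, by the non-negativity noted right after \eqref{eq:Decomp} together with Lemma \ref{lem:bounded}, for any admissible parameter $\delta'$; moreover $B^{\delta',\eta}$ does not depend on $y$, so the cases $y=\pm1$ are handled at once.

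It remains to choose $\delta'$ so that the density furnished by Lemma \ref{lem:dense} matches the target $(1+\delta)/L$. Write $\kappa_L:=e^{C_1/L}/(e^{C_1/2L}-1)^2$, a fixed positive constant, and pick $\delta'\in(0,1)$ small enough that $\delta'\kappa_L\le\min(\delta,1/2)$; such a $\delta'$ exists. In particular $\delta'\kappa_L\in(0,1)$, so Lemma \ref{lem:dense} applies to $B^{\delta',\eta}$. Let $(s_k^{\delta,\eta})_{k\in\N}$ be the monotonically increasing enumeration of $B^{\delta',\eta}$. Applying the previous paragraph along $(s_k^{\delta,\eta})_k\subseteq\N_{L,\eta}$, both summands in \eqref{eq:Decomp} are bounded, hence so is $E[\mathbf{1}_{\{\tau=\tau_m\}}(M_{s_k^{\delta,\eta},1}-M_{s_k^{\delta,\eta}}^y)]$ for $y=\pm1$, and therefore by \eqref{eq:Splitwrtfjump} together with the vanishing $\{\tau=1/L\}$ term the sequence $(E[\mathbf{1}_{\{\tau\neq\tau_s\}}(M_{s_k^{\delta,\eta},1}-M_{s_k^{\delta,\eta}+\tau})])_{k\in\N}$ is bounded. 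Finally, Lemma \ref{lem:dense} gives $\limsup_{k\to\infty}s_k^{\delta,\eta}/k\le \frac{1}{L}+\delta'\kappa_L/L\le\frac{1}{L}+\frac{\delta}{L}=\frac{1+\delta}{L}$, which is the claim.

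The argument is essentially pure bookkeeping and I do not expect a genuine obstacle; the one point requiring care is the choice of the auxiliary parameter $\delta'$. It must lie in $(0,1)$ for the construction of $B^{\delta',\eta}$ out of the sequences $t_k^{(j,\delta',\eta)}$ to be valid, while $\delta'\kappa_L$ must simultaneously be $<1$ (so that Lemma \ref{lem:dense} is applicable) and $\le\delta$ (so that the density constant collapses to $(1+\delta)/L$); the single requirement $\delta'\kappa_L\le\min(\delta,1/2)$ secures all of these. One should also keep in mind that Lemma \ref{lem:BoundSideterm} and, in the direction used here, Lemma \ref{lem:bounded} are uniform in $t$ over their respective index sets, so passing to the subsequence $(s_k^{\delta,\eta})_k$ costs nothing.
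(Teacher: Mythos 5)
Your proof is correct and follows essentially the same route as the paper: enumerate $B^{\delta',\eta}$ for a suitably scaled auxiliary parameter $\delta'$, combine \eqref{eq:Splitwrtfjump}, \eqref{eq:Decomp}, Lemma \ref{lem:BoundSideterm} and Lemma \ref{lem:bounded} for the boundedness, and invoke Lemma \ref{lem:dense} for the density. The only difference is cosmetic: the paper takes $\widetilde\delta=\delta(e^{C_1/2L}-1)^2/e^{C_1/L}$ exactly (implicitly assuming $\delta$ and $\widetilde\delta$ are small enough to lie in $(0,1)$), whereas you pick $\delta'\in(0,1)$ with $\delta'\kappa_L\le\min(\delta,1/2)$, which is marginally more careful about the constraints $\delta'\in(0,1)$ and $\delta'\kappa_L\in(0,1)$ required by the construction of $B^{\delta',\eta}$ and by Lemma \ref{lem:dense}.
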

\begin{proof}
Let $\widetilde{\delta} := \delta(e^{\frac{C_1}{2L}}-1)^2/e^{\frac{C_1}{L}}$. Consider $(s_k^{\delta,\eta})_{k\in\N}$ an increasing enumeration of $B^{\widetilde{\delta},\eta}$. By Lemma \ref{lem:dense} we have that
\[
\limsup_{k\to \infty} \frac{s_k^{\widetilde{\delta},\eta}}{k} \le L^{-1}\left(1+\widetilde{\delta}\frac{e^{\frac{C_1}{L}}}{\left(e^{\frac{C_1}{2L}}-1\right)^2}\right) = \frac{1+\delta}{L}.
\]
By equations \eqref{eq:Splitwrtfjump}, \eqref{eq:Decomp} and \eqref{eq:FixZwei} and Lemmata \ref{lem:BoundSideterm} and \ref{lem:bounded} as well as $s_k^{\delta,\eta}\in \N_{L,\eta}$ for all $k$ we know that $(E[\mathbf{1}_{\{\tau\neq \tau_s\}}(M_{s_k^\delta,1}-M_{s_k^\delta+\tau})])_{k\in\N}$ is bounded.
\end{proof}
\subsection{Proof of Theorem \ref{th:Main}}
Let $(t_{k}^{\frac{\delta}{2},\eta})_{k\in\N}$ be a subsequence according to Lemma \ref{lem:Oben} and $(s_{k}^{\frac{\delta}{2},\eta})_{k\in\N}$ a subsequence according to Corollary \ref{cor:Unten}. Now, consider
\[
A^{\delta,\eta} :=  \left\{t\in \N_{L,\eta} : t\in \left\{t_k^{\frac{\delta}{2},\eta}\right\}_{k\in\N}\cap\left\{s_k^{\frac{\delta}{2},\eta}\right\}_{k\in\N}\right\}
\]
and let $K_n^\delta := |\{t\in \N_{L,\eta}: t<n/L+\eta, t\not\in A^{\delta,\eta}\}|$. We have that
\begin{align*}
K_n^{\delta,\eta} &\le \left|\left\{t\in  \N_{L,\eta}:  t< n/L+\eta, t\not\in\left\{t_k^{\frac{\delta}{2},\eta}\right\}\right\}\right|+\left|\left\{t \in \N_{L,\eta} : t< n/L,t\not\in \left\{s_k^{\frac{\delta}{2},\eta}\right\} \right\}\right| \\
&=: K_{n,1}^\delta+K_{n,2}^\delta.
\end{align*}
By the construction of the sequences $(t_{k}^{\delta/2,\eta})_{k\in\N}$, $(s_k^{\delta/2,\eta})_{k\in\N}$ we know that for $j\in\{1,2\}$ $\limsup_{n\to \infty}K_{n,j}^{\delta,\eta}/n\le \delta/4$, $j\in\{1,2\}$. This implies that $\limsup_{n\to \infty} K_n^{\delta,\eta}/n\le \delta/2$. As in Lemma \ref{lem:Oben} and Corollary \ref{cor:Unten} this implies that for $(t_k^{\delta,\eta})_{k\in\N}$ an increasing enumeration of $A^{\delta,\eta}$ we have that $
\limsup_{k\to \infty} t_k^{\delta,\eta}/k \le (1+\delta)/L. $

Furthermore, since $t_k^{\delta,\eta} \in \{t_k^{\delta/2,\eta}\}_{k\in\N}\cap\{s_k^{\delta/2,\eta}\}_{k\in\N}$ we have by Lemma \ref{lem:Oben} and  Corollary \ref{cor:Unten} as well as \eqref{eq:RewriteEins} that for all $k\in\N$
\begin{align*}
E[|M_{t_k^{\delta,\eta},1}-M_{t_k^{\delta,\eta},2}|]&\le c_{\mathrm{ei},L}^{-1}(E[M_{t^{\delta,\eta}+\frac{1}{L}}-M_{t^{\delta,\eta}}]+E[\mathbf{1}_{\{\tau\neq \tau_s\}}(M_{t^{\delta,\eta},1}-M_{t^{\delta,\eta}+\tau})])\le C
\end{align*}
with $C$ independent of $k$. This implies that $(M_{t_k^{\delta,\eta}}-E^\xi[M_{t_k^{\delta,\eta}}])_{k\in\N}$ is tight with respect to the annealed measure.\qed
\section{Quenched tightness-Proof of Theorem \ref{Theo:Quenched}}
We now sketch which changes to the argument are necessary to get Theorem \ref{Theo:Quenched}. 
The analogue to inequality \eqref{eq:MainInequality} is
\begin{align*}
E^\xi[|M_{t,1}-M_{t,2}|]\le c_{\xi(0),L}^{-1}\left(E^\xi[M_{t+1/L}-M_t]+E^\xi[\mathbf{1}_{\{\tau\neq \tau_s\}}(M_{t,1}-M_{t+\tau}]\right).
\end{align*}
Since Lemma \ref{lem:limsup} is proven by comparing $E^\xi$ with $E^{\mathrm{es}}$ we can derive that there is a $x^\ast\in\R$ with $
\limsup_{t\to \infty} \frac{E^{\xi}[M_t]}{t}\le x^\ast$
for $\mathbb{P}$-a.e. $\xi$. This allows us to replace $E$ by $E^\xi$ in Lemma  \ref{lem:Oben} and no further changes are needed. Note that $t_k^{\delta,\eta}$ will be $\xi$ dependent, since the condition $E^\xi[M_{t+1/L}-M_t]\le \frac{2x^\ast}{L\delta}$ depends on $\xi$.

In section 2.3, replacing $E$ and $P$ by $E^\xi$ and $P^\xi$ everywhere suffices and Lemma \ref{lem:sigExpTail} is already stated for $\mathbb{P}$-a.e. $\xi$. Again the condition $E^\xi[M_{t,1}-M_{t-j/L,1}]\le \frac{2}{L\delta}x^\ast j e^{\frac{C_1}{2L}(j-1)}$ being $\xi$ dependent forces the $s_k^{\delta,\eta}$ in Corollary \ref{cor:Unten} to be $\xi$ dependent. 

Combining these ingredients to prove Theorem \ref{Theo:Quenched} is parallel to subsection 2.4, one only needs to replace $E$ by $E^\xi$ in the last display.

We have not managed to find a deterministic subsequence $(t_k)_{k\in\N}$ of $\N_{L,\eta}$ such that $(M_{t_k}-E^\xi[M_{t_k}])_{k\in\N}$ is tight for $\mathbb{P}$-a.e. $\xi$.

\printbibliography
\end{document}